\numberwithin{equation}{section}
\def\thm@space@setup{%
  \thm@preskip=0.1in
  \thm@postskip=0in
}
\theoremstyle{plain}
\newtheorem{thm}{Theorem}[section]
\newtheorem{prop}[thm]{Proposition}
\newtheorem{cor}[thm]{Corollary}
\theoremstyle{definition}
\newtheorem{defn}{Definition}[section]
\newtheorem*{rem}{Remark}
\DeclareMathOperator{\wt}{wt}
\DeclareMathOperator{\shape}{shape}
\begin{document}

\begin{center} {\Large{\sc A Determinantal Formula for Catalan Tableaux and TASEP Probabilities}} \\
\vspace{0.1in}
Olya Mandelshtam
\footnote{Phone: +1 (949) 689-5748\\
Fax: +1 (510) 642-8204\\
E-mail: olya@math.berkeley.edu}
 \end{center}

\vspace{0.3in}
\begin{abstract}
We present a determinantal formula for the steady state probability of each state of the TASEP (Totally Asymmetric Simple Exclusion Process) with open boundaries, a 1D particle model that has been studied extensively and displays rich combinatorial structure. These steady state probabilities are computed by the enumeration of Catalan tableaux, which are certain Young diagrams filled with $\alpha$'s and $\beta$'s that satisfy some conditions on the rows and columns.  We construct a bijection from the Catalan tableaux to weighted lattice paths on a Young diagram, and from this we enumerate the paths with a determinantal formula, building upon a formula of Narayana that counts unweighted lattice paths on a Young diagram. Finally, we provide a formula for the enumeration of Catalan tableaux that satisfy a given condition on the rows, which corresponds to the steady state probability that in the TASEP on a lattice with $n$ sites, precisely $k$ of the sites are occupied by particles. This formula is an $\alpha\ /\  \beta$ generalization of the Narayana numbers.
\end{abstract}

\section{Introduction}

The TASEP (Totally Asymmetric Simple Exclusion Process) is a model from statistical mechanics that describes particles hopping in one direction along a one-dimensional lattice. New particles can enter and exit the sides of the lattice, and particles can hop to the right as long as there is at most one particle per site. 

\begin{wrapfigure}[7]{r}{0.35\textwidth}
\centering
\includegraphics[width=0.35\textwidth]{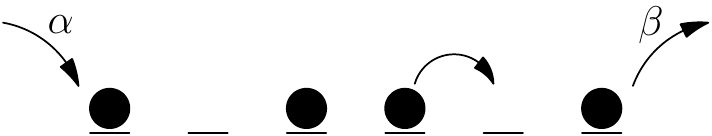}
\caption{The parameters of the TASEP.}
\noindent
\label{TASEP_parameters}
\end{wrapfigure}

In the TASEP with open boundaries, with parameters shown in Figure \ref{TASEP_parameters}, the rules are as follows: particles hop right on a lattice of $n$ sites, such that 
\begin{itemize}
\item There is at most one particle per site.
\item A new particle can enter at the left at rate $\alpha$.
\item A particle can exit at the right at rate $\beta$.
\item Particles hop to the right at rate 1.
\end{itemize}

We represent a state of a TASEP of size $n$ by a word $\tau$ in $\{\bullet,\circ\}^n$ where $\bullet$ represents a particle and $\circ$ represents a hole. We use the notation $\Pr(\tau)$ to denote the stationary probability of state $\tau$.

The TASEP is a special case of the ASEP (Asymmetric Simple Exclusion Process), in which particles can hop both right and left, and can enter and exit from both sides of the lattice. The ASEP is one of the simplest and most investigated models for the dynamics of particle systems. The existence of exact solutions for this system is extremely useful as testing grounds for non-equilibrium problems in statistical mechanics (see Derrida \cite{derrida} and the references therein). The ASEP also displays rich algebraic and combinatorial structure: in particular, there are many nice combinatorial results for steady state probabilities in various levels of generality, due to Shapiro and Zeilberger \cite{shapiro}, Duchi and Schaeffer \cite{duchi}, and Corteel and Williams in \cite{williams2007} and \cite{williams2011}. From these works, there is a large number of related objects that give a combinatorial interpretation for the steady state probabilities of some specializations of the ASEP. However, in general there is not an explicit formula for the steady state probabilities. 

In this work we limit the discussion to the case of the TASEP, and we provide an explicit determinantal formula for all steady state probabilities. We start by defining \textbf{Catalan tableaux}, which can be seen to be equivalent to the staircase tableaux of Corteel and Williams.

\begin{defn} \label{catalan_def}
A \textbf{Catalan tableau} of \textbf{size} $n$ is a filling of the Young diagram of shape $(n,n-1,\ldots,1)$ with $\alpha$'s and $\beta$'s  according to the following rules:
\begin{enumerate}[i.]
\item Every box on the diagonal must contain an $\alpha$ or a $\beta$.
\item All the boxes in the same row and west of a $\beta$ must be empty.
\item All the boxes in the same column and north of an $\alpha$ must be empty.
\item Any box that does not have either an $\alpha$ in its column below or a $\beta$ in its row to the right must contain an $\alpha$ or a $\beta$.
\end{enumerate}
\end{defn}

\begin{defn} The \textbf{type} of a Catalan tableau is the word in $\{\bullet,\circ\}^{\ast}$ that is obtained by reading the diagonal entries from top to bottom, where $\alpha$ is read as a $\bullet$ and $\beta$ is read as a $\circ$.  The type of a Catalan tableau can be interpreted as a state of the TASEP.
\end{defn}

\begin{defn} The \textbf{weight} $\wt(T)$ of a Catalan tableau $T$ is the product of the symbols in its filling. See Figure \ref{staircase} for an example.
\end{defn}
From the work of Corteel and Williams, we have the following beautiful interpretation for the TASEP in terms of the Catalan tableaux.
\begin{thm}[Corteel, Williams] \label{cw_thm}
Let $\Pr(\tau)$ be the stationary probability of state $\tau$ of a TASEP of size $n$. Then
\begin{displaymath}
\Pr(\tau) = \frac{1}{Z_n}\sum_T \wt(T)
\end{displaymath}
where the sum is over Catalan tableaux $T$ of type $\tau$, and $Z_n = \sum_T \wt(T)$ is the sum over all Catalan tableaux $T$ of size $n$.
\end{thm}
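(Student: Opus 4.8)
The plan is to prove the formula via the matrix ansatz of Derrida, Evans, Hakim, and Pasquier, which reduces the computation of stationary probabilities to a purely algebraic problem. Recall that this ansatz asserts that if one can find (possibly infinite) matrices $D$ and $E$ together with a row vector $\langle W|$ and a column vector $|V\rangle$ satisfying the bulk relation $DE = D+E$ and the boundary relations $\langle W|E = \tfrac{1}{\alpha}\langle W|$ and $D|V\rangle = \tfrac{1}{\beta}|V\rangle$, then the unnormalized stationary weight of a state $\tau=\tau_1\cdots\tau_n$ is
\begin{displaymath}
f(\tau) = \langle W|\prod_{i=1}^n\big(\tau_i D + (1-\tau_i)E\big)|V\rangle ,
\end{displaymath}
where $\tau_i=1$ marks a particle and $\tau_i=0$ a hole, and the genuine probability is $f(\tau)/Z_n$ with $Z_n=\sum_\tau f(\tau)=\langle W|(D+E)^n|V\rangle$. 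Granting this ansatz as an external input, proving Theorem \ref{cw_thm} amounts to exhibiting a single choice of $D,E,\langle W|,|V\rangle$ for which $f(\tau)$ equals the weight generating function $\sum_{T}\wt(T)$ over Catalan tableaux $T$ of type $\tau$.

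The heart of the argument is therefore to build these operators combinatorially so that the matrix product literally enumerates tableaux diagonal cell by diagonal cell. I would work on a vector space whose basis is indexed by the profile of the already-constructed northeast boundary of the staircase, equivalently by the height of the associated lattice path. The matrix entries of $D$ and $E$ are then defined so that applying $\tau_i D + (1-\tau_i)E$ corresponds to appending the next diagonal cell—an $\alpha$ read as a particle or a $\beta$ read as a hole—together with every choice of fillings in the newly exposed row and column that is consistent with conditions ii--iv of Definition \ref{catalan_def}. The symbols $\alpha$ and $\beta$ are tracked by the matrix entries so that each completed tableau contributes exactly $\wt(T)$, with empty boxes contributing the factor $1$; the boundary vectors $\langle W|$ and $|V\rangle$ encode the empty starting boundary and the closing of the final cell, and the scalars $\tfrac1\alpha,\tfrac1\beta$ record the fillings forced along the two extreme cells of the diagonal.

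With the operators in hand, the remaining work is to verify the three defining relations, and I expect the bulk relation $DE=D+E$ to be the main obstacle—both in choosing the right basis and in making the combinatorics close up. Combinatorially this relation says that the configurations obtained by placing an adjacent occupied--empty pair along the boundary decompose bijectively into those counted by $D$ and by $E$ separately, and I would prove it by an explicit weight-preserving bijection on the local fillings. This is precisely the step where conditions ii and iii (the emptiness of boxes west of a $\beta$ and north of an $\alpha$) and the forcing condition iv conspire to make the bookkeeping work; designing a map that is simultaneously weight-preserving and compatible with all four tableau rules is the delicate point. The boundary relations are comparatively routine, following from how the first and last diagonal cells meet the empty boundary vectors. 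Once the three relations are checked, the uniqueness built into the matrix ansatz immediately gives $f(\tau)=\sum_{T:\ \text{type}(T)=\tau}\wt(T)$, and dividing by $Z_n=\sum_T\wt(T)$ yields the stated expression for $\Pr(\tau)$. (As an alternative route, one could instead define a Markov chain directly on Catalan tableaux that projects onto the TASEP and show its stationary distribution is proportional to $\wt$, but the matrix ansatz keeps the verification local and is the approach I would pursue first.)
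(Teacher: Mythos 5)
The first thing to note is that the paper contains no proof of Theorem \ref{cw_thm} at all: it is imported as a known result of Corteel and Williams \cite{williams2007}, so the only meaningful comparison is with the original proof --- and your plan is indeed the route taken there (the Derrida--Evans--Hakim--Pasquier matrix ansatz together with transfer operators whose products enumerate tableaux). The problem is that your proposal is a strategy outline, not a proof: once the DEHP ansatz is granted as an external input, the \emph{entire} mathematical content of the theorem is the explicit construction of $D$, $E$, $\langle W|$, $|V\rangle$ and the verification of $DE=D+E$ and the boundary relations, and these are precisely the steps you leave as promissory notes (``the matrix entries of $D$ and $E$ are then defined so that \ldots'', ``I expect the bulk relation $DE=D+E$ to be the main obstacle \ldots the delicate point''). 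Nothing you have written certifies that operators with the advertised enumerative property exist, nor that the weight-preserving bijection you would ``design'' for $DE=D+E$ can be made compatible with rules (ii)--(iv); identifying this as delicate is not the same as doing it. For what it is worth, the natural basis is indexed by the number of free ($\alpha$-indexed) rows of the partially built tableau --- exactly the statistic Section \ref{sec_catalan} isolates, with $D$ and $E$ encoding the column-addition rules of Figure \ref{cat_example} --- and the relation $DE=D+E$ is then a concrete computation on matrix entries, not something that falls out of general position.

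There is also a normalization error you would need to repair. With the relations as you state them ($\langle W|E=\alpha^{-1}\langle W|$, $D|V\rangle=\beta^{-1}|V\rangle$), the matrix product $f(\tau)$ is a Laurent polynomial in $\alpha^{-1},\beta^{-1}$ and \emph{cannot} equal $\sum_T \wt(T)$: already for $n=1$ one has $f(\bullet)=\beta^{-1}\langle W|V\rangle$, while the unique Catalan tableau of type $\bullet$ has weight $\alpha$. The correct identity is $f(\tau)=(\alpha\beta)^{-n}\sum_T\wt(T)$ (up to the constant $\langle W|V\rangle$); the global factor cancels in $\Pr(\tau)$, so the theorem survives, but the intermediate claim as you wrote it is false, and fixing it requires either rescaling the operators (e.g.\ using matrices satisfying $\tilde D\tilde E=\alpha\beta(\tilde D+\tilde E)$, $\langle W|\tilde E=\beta\langle W|$, $\tilde D|V\rangle=\alpha|V\rangle$) or carrying the factor explicitly. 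Finally, ``the uniqueness built into the matrix ansatz'' conflates two facts --- that the matrix-product measure is stationary (the content of DEHP) and that the finite irreducible TASEP chain has a unique stationary distribution --- and it silently requires $\langle W|V\rangle\neq 0$ and $Z_n\neq 0$, which your combinatorial construction should establish via positivity. In short: right strategy, matching the original Corteel--Williams argument, but the decisive constructions and verifications are missing and one stated identity is off by the factor $(\alpha\beta)^n$.
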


\begin{wrapfigure}[14]{r}{0.31\textwidth}
\centering
\includegraphics[width=0.27\textwidth]{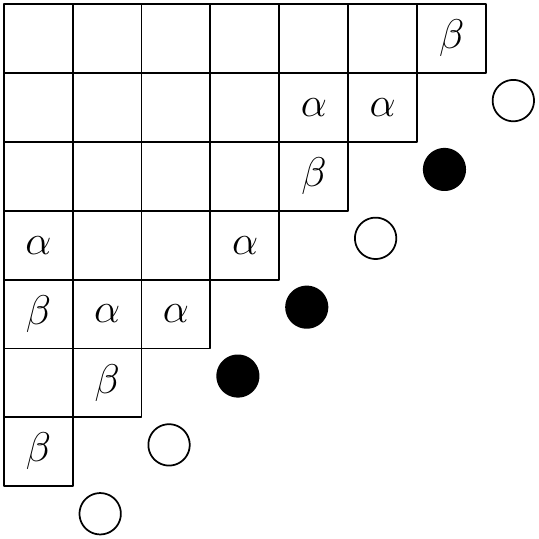}
\caption{A Catalan tableau of type $\circ\bullet\circ\bullet\bullet\circ\circ$ and weight $\alpha^6\beta^5$.}
\noindent
\label{staircase}
\end{wrapfigure}

Our main results are new formulas for the enumeration of Catalan tableaux, which in turn provide new formulas for steady state probabilities of the TASEP.  For a given word $\tau$ in $\{\bullet,\circ\}^{\ast}$, define the weight generating function
\begin{equation*}P_{\tau}(\alpha,\beta) = \sum_T \wt(T),\end{equation*} 
where the sum is over Catalan tableaux $T$ of type $\tau$.
Our first result is a determinantal formula for $P_{\tau}(\alpha, \beta)$, which is given in Theorem \ref{main_thm}.  Our method of proving this result is to give a bijection between Catalan tableaux and certain weighted lattice paths which resemble a construction of Viennot \cite{viennot}, and then enumerate the weighted lattice paths, generalizing an argument of Narayana.  As a corollary, we obtain a determinantal formula for the steady state probability of being in an arbitrary state of the TASEP. Our second result, Theorem \ref{mbyk_thm}, is an explicit expression for the Catalan tableaux whose diagonal contains a fixed number of $\alpha$'s and $\beta$'s.  This formula is in fact a 2-parameter generalization of the Narayana numbers (which are related to the Catalan numbers).  As a corollary, we obtain an explicit formula for the steady state probability that in the TASEP on a lattice of $m+k$ sites, precisely $k$ of the states are occupied by particles.

In Section \ref{sec_catalan} of this paper, we describe some properties of the Catalan tableaux and provide a more convenient characterization of them in terms of a compact version that we call Condensed Catalan tableaux. In Section \ref{sec_bijection} we define the bijection from Catalan tableaux to weighted paths which is central to our main results. In Section \ref{sec_narayana}  we provide a proof of Narayana's determinantal formula, and then give an analogous proof for enumerating the weighted paths that represent the Catalan tableaux with the $\alpha,\beta$ generalization $P_{\tau}(\alpha,\beta)$ in Section \ref{sec_determinantal}. Finally, Section \ref{sec_enum} contains a formula for the number of Catalan tableaux with a given number of $\alpha$'s and $\beta$'s on the diagonal, and the related corollaries. 

\medskip
\noindent {\bf Acknowledgements.} I would like to thank Lauren Williams for suggesting the problem to me, and for numerous helpful conversations. I would also like to thank the anonymous referees who gave some very detailed and useful comments. The author was supported by the NSF grant DMS-0943745. DMS-1049513

\FloatBarrier
\section{Condensed Catalan tableaux} \label{sec_catalan}

First, we give some intuition for the structure of Catalan tableaux. An immediate way to increase the size of a Catalan tableau is to add a new column to its left (or a new row above). We define a \textbf{free row} of a Catalan tableau to be a row that is indexed by $\alpha$. This means that the leftmost symbol in this row is an $\alpha$, and hence it contains no $\beta$'s. Analogously, we define a \textbf{free column} to be a column that is indexed by $\beta$, which means the top-most symbol in the column is a $\beta$, and so it contains no $\alpha$'s. To increase the size of the tableau by adding a new column to the left, symbols $\alpha$ or $\beta$ in the new column can only be in the locations of the free rows. Due to (iv) of definition \ref{catalan_def}, the only allowed empty boxes are precisely those that lie above an $\alpha$, left of a $\beta$, or both. Hence every new column that we add must be, starting from the bottom, a (possibly empty) sequence of $\beta$'s followed by an $\alpha$, or just a sequence of $\beta$'s, such that every free row is occupied by a $\beta$ until the $\alpha$ is reached. Figure \ref{cat_example} shows the three cases for the allowed additions of a new column to a Catalan tableau.

\begin{figure}[h]
\centering
\includegraphics[width=0.8\textwidth]{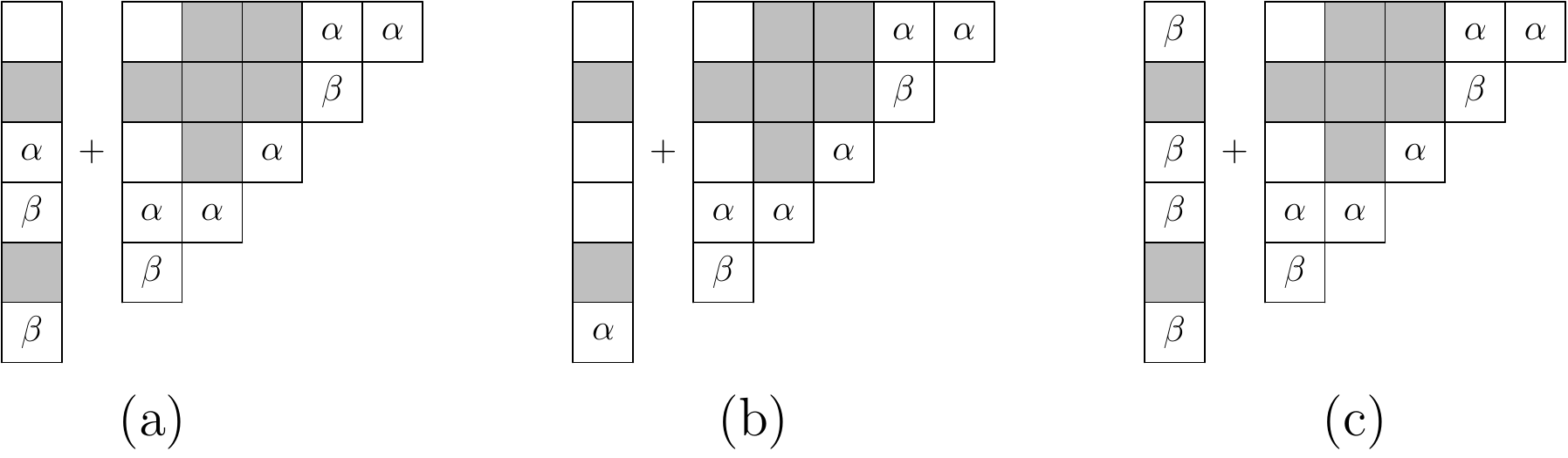}
\caption{The new column we add can have (a) a sequence of $\beta$'s followed by an $\alpha$, (b) a single $\alpha$, or (c) a $\beta$ in every free row.}
\noindent
\label{cat_example}
\end{figure}


Before we can define the lattice paths which we call \textbf{Catalan paths} that are central to our main results, we must first introduce another characterization of the Catalan tableaux in terms of a condensed version. 

\subsection{Condensed Catalan tableaux}

A partition $\lambda=(\lambda_1,\lambda_2,\ldots)$ with $\lambda_1 \geq \lambda_2 \geq \cdots \geq 0$ is a weakly decreasing sequence of nonnegative integers. We identify a partition with its Young diagram, which is a collection of left-adjusted rows of boxes such that the $i$th row contains $\lambda_i$ boxes.

\begin{defn}\label{condensed_defn}
A \textbf{Condensed Catalan tableau} $T$ of \textbf{size} $(k,n)$ and \textbf{shape} $\lambda$ is a filling of the Young diagram $\lambda$ with $\alpha$'s and $\beta$'s according to rules (ii)--(iv) of Definition \ref{catalan_def}. The Young diagram $\lambda$ is contained in a $k \times (n-k)$ rectangle, justified to the northwest.
\end{defn}

We use the notation $\shape(T)$ to mean the shape of the Young diagram $\lambda$ assotiated to $T$. We also associate to $T$ a lattice path $L=L(T)$ with steps south and west, which starts at the northeast corner of the rectangle and ends at the southwest corner, and follows the southeast border of $\lambda$. The \textbf{type} of $T$ is the word $\tau$ in $\{\bullet,\circ\}^{\ast}$ that we obtain by reading $L$ from northeast to southwest and assigning a $\bullet$ to a south-step and a $\circ$ to a west-step.

\begin{defn}
The \textbf{weight} of a Condensed Catalan tableau $T$ of size $(k,n)$ with associated Young diagram $\lambda$ is
\begin{equation*}\wt(T) = \alpha^{k+j} \beta^{n-k+\ell}\end{equation*}
where $j$ is the number of $\alpha$'s and $\ell$ is the number of $\beta$'s in the filling of $\lambda$. 
\end{defn}

\begin{figure}[h]
\centering
\includegraphics[width=0.6\textwidth]{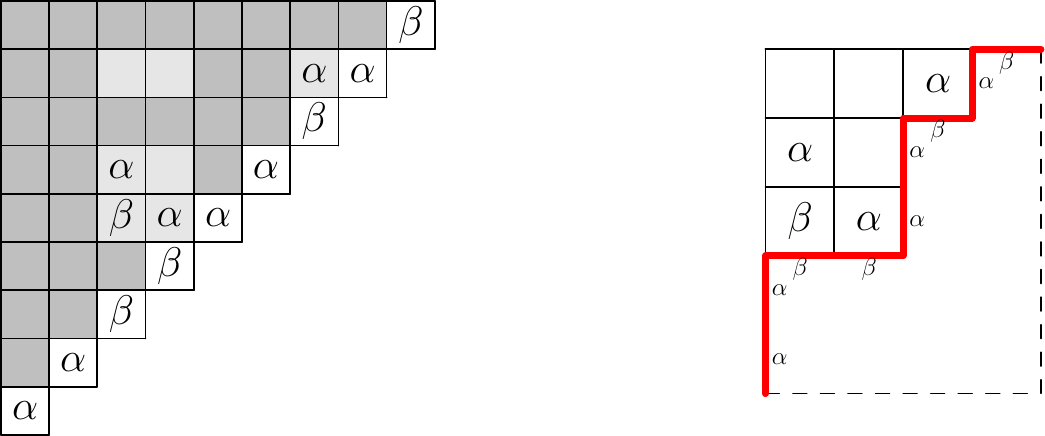}
\caption{A Catalan tableau of type $\circ\bullet\circ\bullet\bullet\circ\circ\bullet\bullet$ and its corresponding Condensed Catalan tableau. The dark boxes in the staircase version are the boxes of the rows or columns that we remove in order to form the condensed tableau, and the white boxes of the diagonal are the ones that determine the shape of the condensed tableau. The Condensed Catalan tableau has size $(k,n)=(4,9)$, shape $\shape(T) = (3,2,2,0,0)$, and weight $\wt(T)=\alpha^8\beta^5$. The path outlined in bold on the Condensed Catalan tableau is the lattice path $L(T)$.}
\noindent
\label{condensed}
\end{figure}

In Figure \ref{condensed}, we demonstrate by example the conversion from a staircase-shape Catalan tableau of size $n$ with $k$ $\alpha$'s on the diagonal to a Condensed Catalan tableau of size $(k,n)$. We remove from the staircase version of the tableau all the rows whose right-most box contains a $\beta$ and all the columns whose bottom-most box contains an $\alpha$.  After collapsing together the remaining boxes and justifying them to the northwest in a rectangle of size $k \times (n-k)$, we obtain a Young diagram $\lambda$ that is filled with $\alpha$'s and $\beta$'s according to rules (ii)--(iv) of Definition \ref{catalan_def}. The southeast border of $\lambda$ is identified with a lattice path $L$ with south and west edges, that goes from the northeast to the southeast corners of the rectangle. The edges of $L$ are given weights, with a west edge assigned a $\beta$ and a south edge assigned an $\alpha$. It is easy to check that a west edge of $L$ corresponds to a row of the staircase tableau whose right-most box contained a $\beta$, and a south edge of $L$ corresponds to a column of the staircase tableau whose bottom-most box contained an $\alpha$. Consequently, the weight of $L$ is precisely the weight of the diagonal of the staircase version of the Catalan tableau.

The following connects the Condensed Catalan tableaux back to the TASEP.

Let $\tau$ be a word of length $n$ in $\{\bullet,\circ\}$ representing a state of the TASEP. We draw a lattice path $L$ with steps south and west by reading $\tau$ from left to right, and by drawing a step south for a $\bullet$ and a step west for a $\circ$. We obtain a Young diagram $\lambda$ whose southeast border coincides with $L$. The size of the rectangle containing $\lambda$ is $k \times (n-k)$, where $k$ is the number of $\bullet$'s in $\tau$. More precisely, $\lambda=(\lambda_1,\ldots,\lambda_k)$, where $\lambda_i$ the number of $\circ$'s to the right of the $i$th $\bullet$. Then any filling with $\alpha$'s and $\beta$'s of $\lambda$ according to rules (ii)--(iv) of definition \ref{catalan_def} yields a Condensed Catalan tableau of type $\tau$. We can also refer to $\lambda$ by $\lambda(\tau)$.

\begin{rem}
Note that when $j_1$ of the $\lambda_i$'s of the Condensed Catalan tableau $T$ of type $\tau$ are equal to 0, this means that $\tau$ ends with a a string of  $j_1$ $\bullet$'s. Furthermore, if $ (n-k)-\lambda_1 = j_2$, this means that $\tau$ begins with a string of $j_2$ $\circ$'s.  Thus keeping track of the size of the rectangle containing the Young diagram associated to $T$ is important for preserving the weight of the Condensed Catalan tableau. We can see an example of this in Figure \ref{condensed}.
\end{rem}

\begin{rem}
Condensed Catalan tableaux are essentially the alternative tableaux studied by Viennot in \cite{slides}. See also \cite{viennot} for a closely related object.  Viennot \cite{viennot} states a further characterization of the steady state probabilities that is given by the enumeration of certain weighted lattice paths, which we call Catalan paths and define in the following section. A specialization of this result for the case $\alpha=\beta=1$ is presented in \cite{shapiro}.
\end{rem}

From this point on, we will identify the staircase version of the Catalan tableaux with their corresponding Condensed Catalan tableaux since they are equivalent with a simple bijection.

\section{Bijection from Catalan tableaux to weighted lattice paths}  \label{sec_bijection}
\begin{defn} Let $L=L(T)$ be the lattice path contained in a $k \times  (n-k)$ rectangle that represents a Catalan tableau $T$, and let $\lambda=\shape(T)$ be the Young diagram whose southeast border coincides with $L$. A \textbf{Catalan path} that is \emph{constrained by} $L$ is a path that starts from the northeast end of $L$ and ends at the southwest end, taking the steps south and west, and never crossing $L$. To every such path, we associate a unique labeling of its steps by $\alpha$, $\beta$, and 1 as follows:
\begin{itemize}
\item A south-step that does not lie on the west boundary of $\lambda$ receives a $\beta$.
\item A south-step that lies on the west boundary of $\lambda$ receives a 1.
\item A west-step that lies strictly above $L$ receives an $\alpha$.
\item A west-step that coincides with $L$ receives a 1.
\end{itemize}
Such a path is called a \textbf{weighted Catalan path}, and its weight is the product of all the weights of its edges. In Figures \ref{ex1} and \ref{path_bijection} (c) we see examples of weighted Catalan paths.

\end{defn}

\begin{rem}
The weight of the Catalan path associated to a Condensed Catalan tableau is equal to the weight of the \emph{filling} of that Condensed Catalan tableau. So, to get the total weight of the tableau, we take the product of the weight of the Catalan path times the weight of the boundary path $L$. Recall that the weight of $L$ is $\alpha^k \beta^{n-k}$ where $k \times  (n-k)$ is the size of the rectangle containing $L$.
\end{rem}

\begin{figure}[h]
\centering
\includegraphics[width=0.3\textwidth]{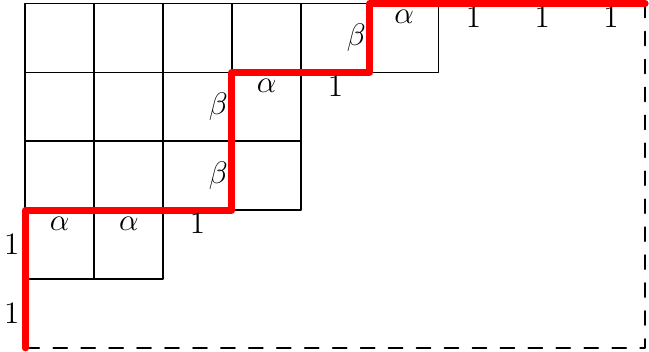}
\caption{An example of a Catalan path with weight $\alpha^4\beta^3$ in a Young diagram of shape $(6,4,4,2,0)$, contained in a $5 \times 9$ rectangle. The total weight of the tableau with the path is thus $\alpha^9\beta^{12}$.}
\noindent
\label{ex1}
\end{figure}

After fixing the size $(k,n)$ of a Condensed Catalan tableau $T$, we can identify the path $L(T)$ with the shape $\lambda=\shape(T)$. Thus we can say a Catalan path is constrained by $\lambda$ to mean the path is constrained by $L$.

\begin{prop}\label{bijprop} There is a weight-preserving bijection between Condensed Catalan tableaux of a fixed shape $\lambda$ and weighted Catalan paths constrained by the same shape $\lambda$. 
\end{prop}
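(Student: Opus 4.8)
The statement bundles two claims: that the correspondence is a bijection and that it preserves weight. The weight part can be isolated immediately. By the remark preceding this proposition, the weight of a weighted Catalan path $P$ is $\alpha^{j}\beta^{\ell}$, where $j$ is the number of west-steps of $P$ lying strictly above $L$ and $\ell$ is the number of south-steps of $P$ not on the west boundary; on the tableau side the weight of the filling is $\alpha^{j'}\beta^{\ell'}$, where $j'$ and $\ell'$ count the $\alpha$'s and $\beta$'s. Hence weight-preservation reduces to the two counting identities
\[
\#\{\text{west-steps of }P\text{ strictly above }L\}=\#\{\alpha\text{'s in }T\},\qquad
\#\{\text{south-steps of }P\text{ off the west boundary}\}=\#\{\beta\text{'s in }T\},
\]
and the plan is to arrange the bijection so that these two equalities hold step-by-step, an $\alpha$ being accounted for by an interior west-step and a $\beta$ by an off-boundary south-step.

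I would construct the map explicitly in one direction and then invert it, in the spirit of Viennot's correspondence. Starting from a Condensed Catalan tableau $T$ of shape $\lambda$, note that rule (iii) forces the (at most one) $\alpha$ of each column to be its topmost symbol, and rule (ii) forces the (at most one) $\beta$ of each row to be its leftmost symbol; thus the $\alpha$'s ``cap'' their columns from above and the $\beta$'s ``cap'' their rows from the left. I would define $P=P(T)$ to be the monotone path inside $\lambda$ whose strictly-interior west-steps record exactly these column caps and whose off-boundary south-steps record exactly the row caps, the remaining steps falling on $L$ or on the west boundary. The first things to check are that this recipe really produces a single \emph{monotone} lattice path from the northeast to the southwest end of $L$, and that it never crosses $L$ (so that $P$ is a legitimate Catalan path constrained by $\lambda$); granting this, the two counting identities above hold by construction, so the map is weight-preserving.

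For bijectivity I would write down the reverse map, reading the labeled steps of a weighted Catalan path and placing an $\alpha$ or $\beta$ in the cell each labeled step designates, all other cells left empty, and then verify that the resulting filling satisfies rules (ii)--(iv) and that the two maps compose to the identity in both directions. The main obstacle is the forcing rule (iv): whether a cell may be empty depends on the presence of an $\alpha$ below it in its column \emph{or} a $\beta$ to its right in its row, so the validity of the filling is genuinely \emph{non-local}, and one must show that the empty cells produced by the reverse map are \emph{exactly} the cells lying above an $\alpha$-capped column or left of a $\beta$-capped row. I expect the cleanest way to control this non-locality, and to confirm that the path-extension choices match the tableau-filling choices under the global non-crossing constraint, is induction on the number of columns of $\lambda$: peeling off one column reduces the problem to a smaller shape, and the possible forms of that column -- a run of $\beta$'s topped by an $\alpha$, a single $\alpha$, or all $\beta$'s with empties forced above -- are precisely the cases analyzed around Figure \ref{cat_example}, each of which should correspond to one admissible way for $P$ to traverse the extra column. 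Matching these local choices, while tracking how rule (iv) is discharged across columns, is the technical heart of the argument.
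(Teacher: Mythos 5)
There is a genuine gap, and it sits precisely at the step you defer as ``the first thing to check'': the recipe that places the path's off-boundary south-steps at the positions of the $\beta$'s and its interior west-steps at the positions of the $\alpha$'s does \emph{not} in general produce a monotone lattice path, because the symbols of a Condensed Catalan tableau are not position-aligned with any single path. Concretely, take $\lambda=(2,2)$ with the filling $(1,1)=\alpha$, $(2,1)=\beta$, $(2,2)=\alpha$, and $(1,2)$ empty; this satisfies rules (ii)--(iv) (the empty box is justified by the $\alpha$ below it) and its filling has weight $\alpha^2\beta$. But if the off-boundary south-step is pinned to the edge east of the $\beta$ at $(2,1)$, then any monotone path through that edge must cross column $1$ along the bottom boundary (a step on $L$, labeled $1$), so at most one west-step can be strictly above $L$ and no such path has weight $\alpha^2\beta$. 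The correct path for this tableau has its $\beta$-labeled south-step in row $1$, not row $2$. More generally, the $\beta$'s of a Condensed tableau need not decrease in height read right to left, and the $\alpha$'s need not sit at the bottoms of their columns, both of which are necessary for symbols to line up with the steps of one path. Your inverse map inherits the same problem: ``placing a symbol in the cell each labeled step designates'' produces a filling whose $\beta$'s hug the south-steps and whose $\alpha$'s sit at column bottoms, and such fillings typically violate rule (iv), so the verification you propose would fail.

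The paper repairs exactly this by inserting an intermediate object, the \emph{modified Catalan tableau}: drop every $\alpha$ to the bottom of its column, and re-sort the $\beta$'s, read right to left, to the highest available rows with at most one per row. Since this rearrangement preserves the number of $\alpha$'s and $\beta$'s, your reduction of weight-preservation to two counting identities survives intact; what is missing is the lemma that makes the rearrangement \emph{reversible}: the modified tableau records, column by column, the number of free ($\alpha$-indexed) rows remaining after that column, and by the case analysis of Figure \ref{cat_example} this sequence of numbers uniquely determines the Condensed filling. Your closing paragraph --- peeling off one column at a time and matching the three cases of Figure \ref{cat_example} --- is in fact the germ of exactly this lemma, and carried out honestly it would reconstruct the paper's proof. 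But as written it is deployed to salvage a map that is wrong as stated: the bijection cannot read step positions directly off symbol positions, and must instead pass through the normalization (or, equivalently, through the free-row statistic) before symbols and steps correspond.
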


For the purpose of the proof, it will be useful to make the following definition.

\begin{defn} A \textbf{modified Catalan tableau} is a filling of a Young diagram $\lambda$ with $\alpha$'s and $\beta$'s with the following properties:
\begin{enumerate}[i.]
\item \label{a} There is at most one $\beta$ in each row.
\item \label{d} The $\beta$'s must all be in consecutive rows starting from the top one. (If there is a $\beta$ in some row, there must also be a $\beta$ in the row above it.)
\item \label{b} There cannot be a $\beta$ to southeast of another $\beta$.
\item \label{c} There cannot be an $\alpha$ to the left and in the same row as a $\beta$.
\item Every box adjacent to the southeast boundary of $\lambda$ contains an $\alpha$ if permissible according to Property \ref{c}.
\end{enumerate}
\end{defn}

Notice that Properties (\ref{a})--(\ref{b}) imply that if the $\beta$'s are associated with south-steps on the edge directly to the east of those $\beta$'s, they will form a Catalan path.

\begin{proof}[Proof of Proposition~\ref{bijprop}]
We obtain the bijection from Condensed Catalan tableaux to Catalan paths by first constructing a modified Catalan tableau that is in bijection with both the Condensed Catalan tableaux and the Catalan paths. The direct correspondence between modified Catalan tableaux and Condensed Catalan tableaux of the same shape can be observed in the example given in Figure \ref{path_bijection} (b). The modified Catalan tableau is constructed as follows from a given Condensed Catalan tableau.
\begin{itemize}
\item Drop each $\alpha$ of the Condensed Catalan tableau to the bottom of the column that contains it. 
\item Reading the $\beta$'s from right to left, place them at the highest row possible (within the same column) such that there is at most one $\beta$ per row, so that in the end we have some set of $\alpha$'s lining the lower boundary of the tableau, and a set of $\beta$'s such that if read right to left, they will be decreasing in height.
\end{itemize}

\begin{figure}[h]
\centering
\includegraphics[width=2.5in]{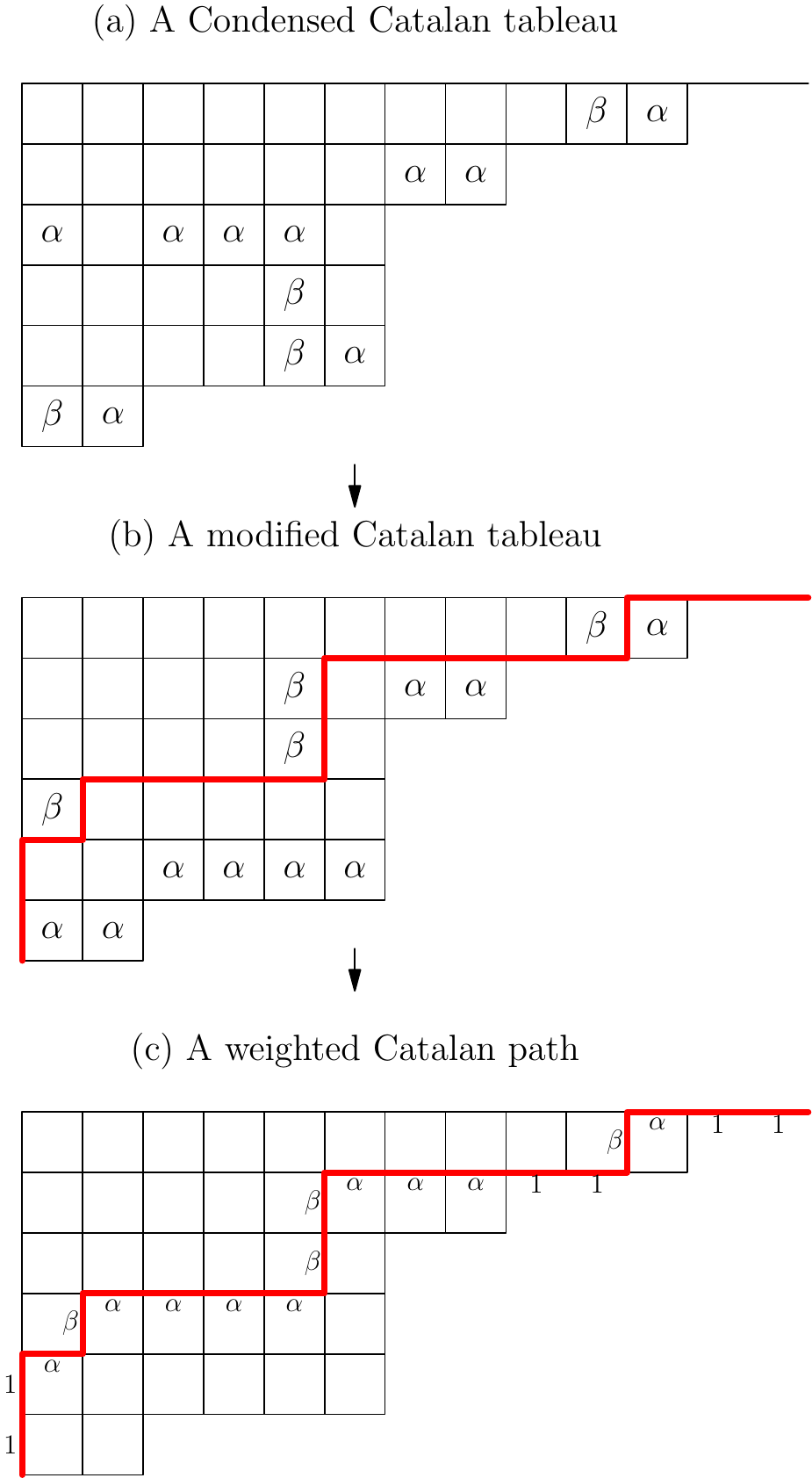}
\caption{A Catalan filling of shape $(11,8,6,6,6,2)$ in a $6 \times 13$ rectangle that corresponds to a weighted Catalan path  of weight $\alpha^9\beta^4$ on this shape.}
\noindent
\label{path_bijection}
\end{figure}

For each column, the number of boxes below the lowest $\beta$ is the number of free rows (i.e. $\alpha$-indexed rows) remaining after the preceding column. Hence the construction of the modified Catalan tableau keeps a record of the number of free rows in each column of the Condensed Catalan tableau. From Figure \ref{cat_example}, we see that the number of free rows remaining after each column of the Condensed Catalan tableau uniquely determines its filling. 
 
A weighted Catalan path can now be constructed from the modified Catalan tableau by reading the columns right to left:
\begin{itemize}
\item If the column contains some $\beta$'s, we draw a south-step that is labelled by $\beta$ on the edge directly to the right of each $\beta$. This labeling is Catalan-path consistent, since these south-steps are all to the right of the west boundary of the Young diagram by construction. 
\item If that column also contains an $\alpha$, we continue the path with a west step that is labelled by $\alpha$. Note that since the $\beta$'s (and the corresponding south-steps) in the modified Catalan tableau are strictly above the southeast boundary as they all lie above an $\alpha$ in that column, this new west step will be strictly above the southeast boundary of the Young diagram. Thus the labeling of the new step by an $\alpha$ is Catalan-path consistent. 
\item If that column contains no $\alpha$, we continue the path with a west step labeled by 1. Notice that if that column contains no $\alpha$, then there are no free rows remaining after it. Thus the height of the lowest $\beta$ in that column is zero. Consequently, this west step must lie on the southeast boundary of the Young tableau, and so its labeling with a 1 is Catalan-path consistent.
\item Once all the columns have been read, the path is at the west boundary of the Young diagram, so we complete the path by drawing some down-edges labeled by 1 directly down to the southwest corner of the tableau. This segment is by construction Catalan-path consistent.
\end{itemize}

In the other direction, given a weighted Catalan path, for each vertical segment that is not on the west boundary of the Young diagram, draw a $\beta$ in the box directly to the left. Then, reading from right to left, for each column, drop the $\beta$'s to the lowest possible locations in that same column (i.e. to the bottom-most free row) so that in the end there is at most one $\beta$ per row. Then, fill in $\alpha$'s in the first available spots reading from right to left, so that in the end there is at most one $\alpha$ per column. Figure \ref{path_bijection} shows an example of a Catalan tableau and its corresponding weighted Catalan path.

\end{proof}

\begin{rem} Viennot gives a similar weight-preserving bijection between Catalan tableaux and lattice paths in Equation (5.3) of \cite{viennot}. However, he does not explicitly assign weights to the edges of the lattice path.
\end{rem}

\FloatBarrier
\section{Narayana's path-counting formula} \label{sec_narayana}

Narayana \cite{narayana} provided the following formula for counting the number of Catalan paths on a Young diagram $\lambda$. An example of such a path on a tableau of shape $(6,4,4,2)$ is shown in Figure \ref{ex1}.

\begin{thm}[Narayana]
The number of Catalan paths on a Young diagram of shape $\lambda=(\lambda_1,\ldots,\lambda_k)$ is $\det A_{\lambda}$, where
\[
A_{\lambda}= \bigg( \binom{\lambda_j+1}{j-i+1} \bigg)_{1\leq i,j\leq k}
\]
\end{thm}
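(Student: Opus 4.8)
The plan is to recast a Catalan path on $\lambda$ as a monotone lattice path in the bounding $k\times(n-k)$ rectangle that stays weakly inside the diagram, and then to obtain the determinant from the Lindström--Gessel--Viennot (LGV) lemma, i.e.\ from a non-intersecting-paths / reflection argument. Concretely, reverse the orientation so that the path runs from the southwest corner of the rectangle to the northeast corner using north and east unit steps; the requirement ``never cross $L$'' becomes the requirement that the path stay weakly on the diagram side of the staircase boundary cut out by the row lengths $\lambda_1\geq\cdots\geq\lambda_k$. The number of such constrained paths is exactly the quantity we wish to count.

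First I would introduce a family of $k$ auxiliary lattice points, the idea being to decouple the single constrained path into a $k$-tuple of paths, one ``responsible'' for each row of $\lambda$, by the standard shift-along-the-anti-diagonal device. I would choose sources $A_i$ and sinks $B_j$ so that a monotone path from $A_i$ to $B_j$ must make $\lambda_j+1$ steps in total, of which $j-i+1$ are of one fixed type; the number of such unconstrained paths is then precisely $\binom{\lambda_j+1}{j-i+1}$, which is the $(i,j)$ entry of $A_\lambda$. When $j-i+1<0$ there are no such paths and the entry is $0$, matching the convention $\binom{m}{r}=0$ for $r<0$ and giving the matrix its near-triangular shape.

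Next I would verify the two hypotheses that make LGV collapse to a clean determinant. The first is \emph{compatibility}: because $\lambda$ is weakly decreasing, the sources $A_1,\ldots,A_k$ and the sinks $B_1,\ldots,B_k$ are sorted oppositely along their respective anti-diagonals, so the only permutation $\sigma$ for which a non-intersecting family of paths $A_i\to B_{\sigma(i)}$ can exist is the identity. Hence the signed sum in the LGV lemma reduces to the unsigned count of non-intersecting families $A_i\to B_i$, and this count equals $\det A_\lambda$. The second is the \emph{bijection} between non-intersecting families $(A_i\to B_i)_{i=1}^{k}$ and single Catalan paths on $\lambda$: given a non-intersecting family one undoes the anti-diagonal shifts to reassemble a single monotone path, where the non-crossing condition is exactly what guarantees the reassembled path stays weakly inside $\lambda$, and conversely a single constrained path is cut into its $k$ row-pieces and shifted apart.

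The main obstacle is this last bijection --- the precise shift-and-reflect correspondence between one boundary-constrained path and a non-intersecting $k$-tuple, together with checking that ``non-crossing'' translates exactly into ``stays inside $\lambda$.'' Once the correspondence and the compatibility condition are in place, the formula $\det A_\lambda$ follows immediately from LGV. I would also set the argument up so that it generalizes: replacing the plain binomial path counts by \emph{weighted} path counts and invoking the weighted LGV lemma is what will later yield the $\alpha,\beta$-refinement $P_\tau(\alpha,\beta)$, so I would present the sources, sinks, and bijection in a form that survives the introduction of edge weights.
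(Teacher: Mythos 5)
Your proposal is correct, but it takes a genuinely different route from the paper. The paper proves Narayana's formula by induction on the number of rows: it encodes a path by the columns $C_1\geq C_2\geq\cdots\geq C_k$ of its south-steps (``Rule 1''), expands $\det A_{\lambda}$ along the top row, and interprets the alternating cofactor terms as an inclusion--exclusion, the $j$th term $\binom{\lambda_j+1}{j}\det A_{(\lambda_{j+1},\ldots,\lambda_k)}$ counting configurations that violate Rule 1 in each of the consecutive pairs among rows $1,\ldots,j$, i.e.\ choices $0\leq C_1<\cdots<C_j\leq\lambda_j$. Your LGV decoupling avoids induction entirely and can be made fully explicit: take sources $A_i=(i,-i)$ on the anti-diagonal $x+y=0$ and sinks $B_j=(j+1,\lambda_j-j)$; a northeast path $A_i\to B_j$ then has $\lambda_j+1$ steps total, of which $j-i+1$ are east steps, giving exactly the entry $\binom{\lambda_j+1}{j-i+1}$. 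A non-intersecting family for the identity permutation is determined by the heights of the single east steps, i.e.\ by integers $0\leq C_i\leq\lambda_i$, and vertex-disjointness along the shared vertical lines $x=i+1$ is precisely $C_{i+1}\leq C_i$, which is Rule 1; since $\lambda$ is weakly decreasing, sources and sinks are both strictly NW-to-SE ordered, so only the identity contributes. (One cosmetic slip: the sinks do not lie on a common anti-diagonal --- they sit on $x+y=\lambda_j+1$ --- so ``sorted along their respective anti-diagonals'' should read ``NW-to-SE ordered''; the conclusion is unaffected.) Notably, the paper remarks after Theorem \ref{main_thm} that LGV seems inapplicable here because the determinant counts \emph{single} paths on $k$-row shapes rather than $k$-tuples; your row-by-row decoupling answers that objection, at least in the unweighted case. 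What the paper's inductive route buys is a seamless passage to the weighted generalization in Section \ref{sec_determinantal}, where the same cofactor expansion absorbs the row contributions $W_j$; carrying your LGV setup to the $\alpha,\beta$ case would require position-dependent edge weights whose weighted path sums reproduce the matrix entries $A_{ij}$ of Theorem \ref{main_thm}. This looks feasible --- the weight contributed by row $j$ of a valid path is $\alpha^{\lambda_j-\max(\lambda_{j+1},C_j)}$, times $\beta$ when $C_j>0$, hence a function of $(j,C_j)$ alone that can be attached to the east edge at $(j,\,C_j-j)$ --- but it is an extra verification your plan defers rather than completes.
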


From \cite{viennot}, Narayana's determinantal formula enumerates the unweighted Catalan paths constrained by the Young diagram of shape $\lambda$.

\begin{wrapfigure}[12]{r}{0.3\textwidth}
\centering
\includegraphics[width=0.3\textwidth]{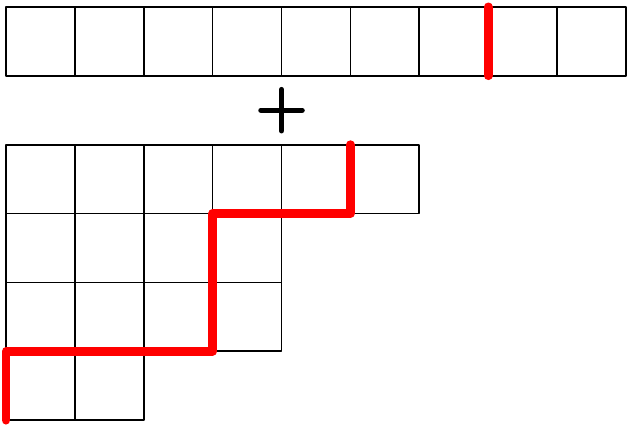}
\caption{Building a path by adding a row above an existing path.}
\noindent
\label{path1}
\end{wrapfigure}

We include the proof of Narayana's Theorem as a warmup for the proof of our main result, Theorem \ref{main_thm}. 

\begin{proof}
We prove Narayana's formula using induction on the number of rows.

First, if the Young diagram has a single row of length $\lambda_1$, all paths in that diagram have exactly one south-step, for which there are $\lambda_1+1$ possible locations including the left-most and right-most edges of the diagram. Thus there are $\lambda_1+1$ such paths, which equals the single entry which is also the determinant of the matrix $A_{(\lambda_1)}$.

Now, let us assume the formula above holds for counting the paths in all diagrams containing up to $k-1$ rows. We first observe that the bottom right $m\times m$ minor of $A_{(\lambda_1,\ldots,\lambda_k)}$ equals the matrix $A_{(\lambda_{k-m+1},\lambda_{k-m+2},\ldots,\lambda_k)}$, and so its determinant counts the number of paths on the shape $(\lambda_{k-m+1},\lambda_{k-m+2},\ldots,\lambda_k)$.

We also note that any path can be uniquely represented by its set of south-steps: to reconstruct the path from the set of south-steps, we simply connect them with west-steps. \textbf{Rule 1} below gives us an if-and-only-if condition to check whether some set of south-steps that lies within the $(\lambda_1,\ldots,\lambda_k)$ shape gives rise to a valid path. 

\textbf{Rule 1.} In a valid path, a south-step in row $\lambda_j$ cannot be west of a south-step in row $\lambda_{j+1}$. 

A path is valid if and only if there is a unique south-step in each row and Rule 1 holds for each $1 \leq j\leq k-1$.

We expand the determinant by its top row as follows:
\begin{equation*} \det A_{(\lambda_1,\ldots,\lambda_k)} =  {\lambda_1+1 \choose 1}\det A_{(\lambda_2,\ldots,\lambda_k)} -  {\lambda_2+1 \choose 2}\det A_{(\lambda_3,\ldots,\lambda_k)} +\cdots \pm {\lambda_k+1 \choose k} \det A_{\emptyset},
\end{equation*}

where by convention, $\det A_{\emptyset}=1$.

\textbf{Step 1:} A path on $(\lambda_1,\ldots,\lambda_k)$ contains a south-step somewhere in the top row, and there are $\lambda_1+1$ choices for that south-step, as we see in Figure \ref{path1}. In particular, $ {\lambda_1+1\choose 1}\det A_{(\lambda_2,\ldots,\lambda_k)}$ counts the combination of all choices for the south-step in the top row with all possibilities for paths that start in the upper-right corner of the shape $(\lambda_2,\ldots,\lambda_k)$ (i.e. the paths in all rows below the first one). All such combinations will certainly be counting the collections of south-steps that represent all the possible paths, but they will also be counting some illegal collections of south-steps that violate Rule 1 at rows 1 and 2, such as in Figure \ref{illegal} (a).

\textbf{Step 2:} Specifically, Rule 1 is violated at rows 1 and 2 when the south-step in the top row lies to the {\it left} of the first south-step of the path starting in the second row, such as in Figure \ref{illegal} (b). Let us subtract out those combinations. In particular, all such illegal combinations will be counted by the set of an illegal pair of south-steps in the top two rows, combined with all possible paths starting from the third row. The ways of selecting this illegal pair of south-steps in rows 1 and 2 is simply a choice of two disjoint columns $C_1$ and $C_2$ with $C_1<C_2$ such that the top row gets a south-step in column $C_1$ and the row below it gets a south-step in  column $C_2$. The number of such choices is ${\lambda_2+1 \choose 2}$, and the number of all paths starting at the third row is $\det A_{(\lambda_3,\ldots,\lambda_k)}$. Their product is the second term in the expansion of the determinantal formula.

\textbf{Step 3:} Now we have subtracted all collections of south-steps that violate Rule 1 in rows 1 and 2, but some of the terms that we subtracted were not actually counted in Step 1, so we have to add those back in. Those paths are the ones where Rule 1 is violated not only in rows 1 and 2, but also in rows 2 and 3. This is because Step 1 presumes that the collection of south-steps starting from row 2 is legal, in particular that the pair of south-steps in rows 2 and 3 is legal. Thus, we must add back in all combinations of the form shown in Figure \ref{illegal} (c). The possibilities for the top 3 rows are counted by ${\lambda_3+1 \choose 3}$, and the possibilities for all paths starting from row 4 are given by $\det A_{(\lambda_4,\ldots,\lambda_k)}$, so their product is the third term of the determinantal expansion.

\begin{figure}[h]
\centering
\includegraphics[width=0.95\textwidth]{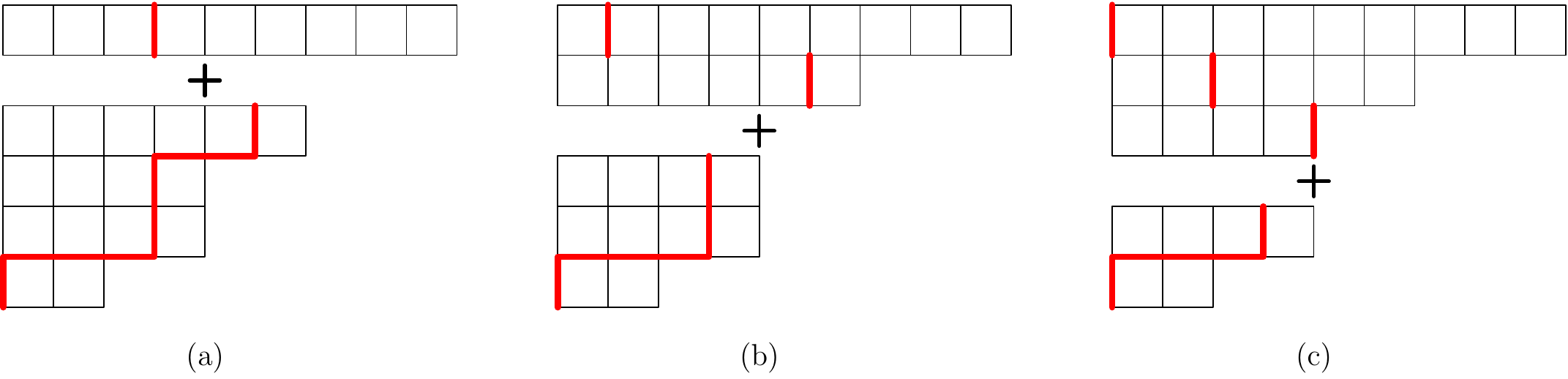}
\caption{(a) A construction in Step 1 that does not yield a valid path. (b) The collections of south-steps we subtract in Step 2. (c) The collections of south-steps we add back in in Step 3. }
\noindent
\label{illegal}
\end{figure}

\textbf{Steps 4 through $\boldsymbol{k}$:} This addition of terms is repeated for $k$ steps, where at step $j$ we add $(-1)^{j-1}$ times the terms that violated Rule 1 in rows 1 and 2, 2 and 3, $\ldots$, and $(j-1)$ and $j$. Similarly to the above, this is due to the fact that the steps 1 through $(j-1)$ only accounted for terms that had a valid collection of south-steps starting from row $(j-1)$. Thus at step $j$, we add the product $(-1)^{j-1}{\lambda_j+1 \choose j} \det A_{(\lambda_{j+1},\ldots,\lambda_k)}$, the $j$'th term in the determinantal expansion 
\begin{equation*}\det A_{(\lambda_1,\ldots,\lambda_k)} =  \sum_{j=1}^k (-1)^{j-1}{\lambda_j+1 \choose j}\det A_{(\lambda_{j+1},\ldots,\lambda_k)}.
\end{equation*}
Thus we have accounted for all the terms in Narayana's formula.
\end{proof}

\FloatBarrier
\section{Enumeration of the weighted paths corresponding to Catalan fillings of shape $\lambda$}\label{sec_determinantal}

From the bijection from Catalan tableaux to paths, we extend Narayana's determinantal formula that counts the unweighted paths to one that gives the weight generating function for the weighted Catalan paths. In this way, we construct an explicit formula for enumerating Catalan tableaux.

\begin{thm} \label{main_thm} Let $\lambda=(\lambda_1,\ldots,\lambda_k)$ with $0 \geq \lambda_k \geq \cdots \geq \lambda_1$ be the shape of a Young diagram that is contained in a $k \times (n-k)$ rectangle. The weight generating function for Condensed Catalan tableaux of size $(k,n)$ and shape $\lambda$ is 
 \begin{equation*}P_{\lambda}(\alpha,\beta) = \alpha^k\beta^{n-k} \det A_{\lambda}^{\alpha,\beta},\end{equation*} 
 where $A_{\lambda}^{\alpha,\beta}=(A_{ij})_{1\leq i,j\leq k}$ is given by
\begin{multline*}
A_{ij} =  \beta^{j-i} \alpha^{\lambda_i-\lambda_{j+1}} \left({\lambda_{j+1} \choose j-i}+\beta {\lambda_{j+1} \choose j-i+1}\right)\\
+\beta^{j-i}\alpha^{\lambda_{i}-\lambda_{j}} \sum_{\ell=0}^{\lambda_j-\lambda_{j+1}-1} \alpha^{\ell} \left({\lambda_j-\ell-1 \choose j-i-1}+\beta {\lambda_j-\ell-1 \choose j-i} \right)
\end{multline*}

\end{thm}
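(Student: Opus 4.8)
The plan is to combine the bijection of Proposition~\ref{bijprop} with an inductive inclusion–exclusion argument modeled on the proof of Narayana's theorem. By Proposition~\ref{bijprop} and the remark following it, the weight of a Condensed Catalan tableau of shape $\lambda$ factors as the weight $\alpha^k\beta^{n-k}$ of the boundary path $L$ times the weight of its associated weighted Catalan path. Hence it suffices to prove that the generating function $W_\lambda=\sum_P\wt(P)$ over weighted Catalan paths $P$ constrained by $\lambda$ equals $\det A_\lambda^{\alpha,\beta}$; restoring the prefactor $\alpha^k\beta^{n-k}$ then gives $P_\lambda(\alpha,\beta)$.

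First I would make the weights fully explicit. A Catalan path has exactly one south-step per row; recording its column by $c_i$ in row $i$, the valid paths are exactly the collections with $0\le c_i\le\lambda_i$ and $c_1\ge c_2\ge\cdots\ge c_k$. Counting the south-steps off the west boundary and the west-steps lying strictly above $L$ yields the closed form $\wt(P)=\alpha^{\lambda_1-\sum_{i=1}^{k}(c_i-\lambda_{i+1})^{+}}\,\beta^{\#\{i:c_i>0\}}$, where $\lambda_{k+1}:=0$ and $x^{+}:=\max(x,0)$. I would then extend this same expression to a weight $w(c)$ defined on \emph{all} collections $(c_1,\dots,c_k)$ with $c_i\in[0,\lambda_i]$, valid or not, so that $W_\lambda=\sum_{c\text{ valid}}w(c)$.

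The inductive step mirrors Narayana's proof. I would first check that the bottom-right $m\times m$ minor of $A_\lambda^{\alpha,\beta}$ is precisely $A_{(\lambda_{k-m+1},\dots,\lambda_k)}^{\alpha,\beta}$; this holds because $A_{ij}$ depends on $(i,j)$ only through $j-i$ and the parts $\lambda_i,\lambda_j,\lambda_{j+1}$, all preserved under the shift matching the reindexing $\mu_r=\lambda_{r+1}$. Expanding $\det A_\lambda^{\alpha,\beta}$ along its top row and applying the induction hypothesis to each minor reduces the theorem to $W_\lambda=\sum_{j=1}^{k}(-1)^{j-1}A_{1j}\,W_{(\lambda_{j+1},\dots,\lambda_k)}$. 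I would interpret $A_{1j}$ as the generating function of an illegal block: south-steps in rows $1$ through $j$ with strictly increasing columns $0\le C_1<\cdots<C_j\le\lambda_j$, weighted by $\alpha^{\lambda_1-\max(C_j,\lambda_{j+1})}\beta^{\#\{i:C_i>0\}}$. Summing over such blocks and separating the cases $C_j\le\lambda_{j+1}$ and $C_j>\lambda_{j+1}$ reproduces the two summands of $A_{1j}$ exactly, with the $j=2$ case already displaying the general pattern. Each term $A_{1j}\,W_{(\lambda_{j+1},\dots,\lambda_k)}$ then equals $\sum_{c\in S_j}w(c)$, where $S_j$ is the set of collections with $c_1<\cdots<c_j$ and $c_{j+1}\ge\cdots\ge c_k$, and the theorem follows from the purely combinatorial identity $\sum_{j:\,c\in S_j}(-1)^{j-1}=[\,c\text{ valid}\,]$, which one proves by locating the first non-ascent and the last ascent of $c$.

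I expect the main obstacle to be verifying that the $\alpha$-weight factors cleanly across the interface between the illegal block (rows $1,\dots,j$) and the valid path below it (rows $j+1,\dots,k$). The delicate point is that the global exponent $\sum_{i=1}^{k}(c_i-\lambda_{i+1})^{+}$ couples each row's column to the \emph{next} shape part; yet for a strictly increasing block one has $C_i\le\lambda_{i+1}$ for every $i<j$, so the block contributes only the single term $(C_j-\lambda_{j+1})^{+}$. This collapse is exactly what forces $A_{1j}$ to depend only on $\lambda_1,\lambda_j,\lambda_{j+1}$ and produces its two-term shape. Lining up the resulting truncation $\max(C_j,\lambda_{j+1})$ and the accompanying $\beta$-bookkeeping (only $C_1$ can equal zero) with the stated formula for $A_{ij}$ is where the calculation must be carried out with care.
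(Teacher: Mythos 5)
Your proposal is correct and takes essentially the same route as the paper: after reducing via Proposition~\ref{bijprop} to weighted Catalan paths, you expand $\det A_{\lambda}^{\alpha,\beta}$ along the top row, interpret each entry $A_{1j}$ as the generating function $\beta^{\#\{i\,:\,C_i>0\}}\alpha^{\lambda_1-\max(C_j,\lambda_{j+1})}$ summed over illegal blocks $0\le C_1<\cdots<C_j\le\lambda_j$ (exactly the paper's $W_j$), and run the same inclusion--exclusion over violations of Rule~1. Your closed-form path weight $\alpha^{\lambda_1-\sum_{i}(c_i-\lambda_{i+1})^{+}}\beta^{\#\{i\,:\,c_i>0\}}$ is correct (it agrees with the edge-weighting, and the collapse $(C_i-\lambda_{i+1})^{+}=0$ for $i<j$ inside a strictly increasing block does make the factorization across the interface work), and your signed-set identity $\sum_{j\,:\,c\in S_j}(-1)^{j-1}=[\,c\text{ valid}\,]$, proved via the first non-ascent and last ascent, is simply a rigorous packaging of the paper's Steps 1 through $k$.
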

    
Observe that similarly to Narayana's formula, for each $m$, the $m \times m$ bottom-right minor of $A^{\alpha,\beta}_{(\lambda_1,\ldots,\lambda_k)}$ equals $A^{\alpha,\beta}_{(\lambda_{k-m+1},\lambda_{k-m+2},\ldots,\lambda_k)}$, whose determinant enumerates the weighted  Catalan paths on the shape $(\lambda_{k-m+1},\lambda_{k-m+2},\ldots,\lambda_k)$.

We prove this formula analogously to the proof of Narayana's path-counting formula by an inductive argument, except that now instead of letting each path have weight 1, we assign the weights to the Catalan paths according to the bijection given in Section \ref{sec_bijection}. In particular, in Narayana's formula, since the weight of each new component is 1, we enumerated the paths by adding up all possible contributions row by row. Similarly, to prove Theorem \ref{main_thm}, we enumerate the possible Catalan paths by, for each successive row of the path, taking the sum of the weight contributions of all the possibilities for that row. The subtlety here is that \emph{the contribution from the segment of the path corresponding to row $j$ is given by the weight of row $j$ in the modified Catalan tableau}. This rule is well-defined since the modified Catalan tableaux are in a weight-preserving bijection with the Catalan paths. In Figure \ref{modified_contributions} we see an example of the weight contributions that arise from each row segment of a Catalan path. Observe that in the modified Catalan tableau, the $\alpha$'s can only lie on the "shelves" of the tableau, i.e. in row $j$ they can only lie in locations $(\lambda_{j+1}+1, \lambda_{j+1}+2,\ldots,\lambda_{j})$, since to create the modified Catalan tableaux, we had all the $\alpha$'s drop the bottom of each column containing them. 

From here on we will no longer be referring to the modified Catalan tableaux, but in the proof that follows, it is assumed that all weight contributions are taken from the rows of the modified Catalan tableaux.

\begin{figure}[h]
\centering
\includegraphics[width=0.9\textwidth]{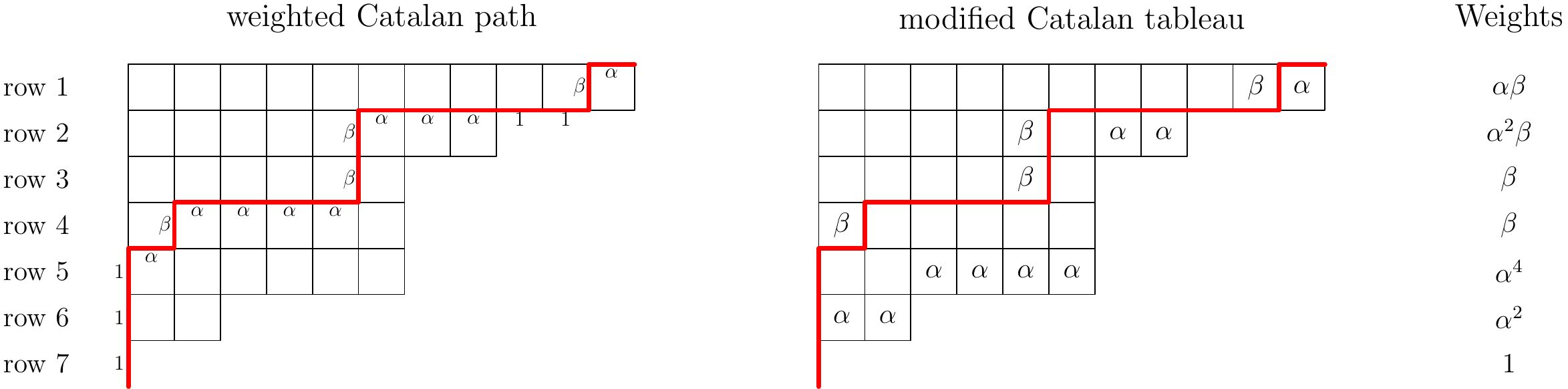}
\caption{The weight contributions from each row of a Catalan path are given by the weight of the corresponding row in the modified Catalan tableau.}
\noindent
\label{modified_contributions}
\end{figure}

In the following, we reproduce the proof of Narayana's path counting formula, on the weighted Catalan paths. 

\begin{proof} First, when $k=1$, a Catalan filling of one row of length $\lambda_1$ can either have some number of $\alpha$'s followed by a $\beta$ when read right to left, or the entire row can be filled with $\alpha$'s. Hence the sum of the weights of Catalan tableau of shape $(\lambda_1)$ is
\begin{equation*}\alpha^{\lambda_1} + \beta \sum_{j=0}^{\lambda_1-1} \alpha^j,\end{equation*}
which equals $\det  A^{\alpha,\beta}_{(\lambda_1)}$.

For $k>1$, we expand the determinant of $A^{\alpha,\beta}_{\lambda}$ by its top row:
\[
\det  A^{\alpha,\beta}_{(\lambda_1,\ldots,\lambda_k)} =  \sum_{i=1}^k (-1)^{i-1} W_i \det A_{(\lambda_{i+1},\dots,\lambda_k)}^{\alpha,\beta}
\]
where $W_j = (A_{\lambda}^{\alpha,\beta})_{1j}$ for $1 \leq j \leq k$.

\textbf{Step 1:} A Catalan path on $(\lambda_1,\ldots,\lambda_k)$ contains a south-step somewhere in the top row including the left-most and right-most edges. We label the position of the possible south-step from left to right by $C_1$ for $0 \leq C_1 \leq \lambda_1$, where $C_1=0$ corresponds to that south-step being on the left-most edge of the tableau and hence has weight 1, and otherwise, that south-step is on the edge to the right of some column $C_1$ and carries weight $\beta$. When $C_1=0$, the 
Catalan path contains no $\beta's$ in the top row, otherwise it contains a $\beta$ in position $C_1$. 
The weight contribution of this segment of the path is thus
\begin{equation*}\begin{cases}
\mbox{(a) } \qquad \qquad \alpha^{\lambda_1-\lambda_2} & C_1=0\\ 
\mbox{(b), (c) } \qquad \beta \alpha^{\lambda_1-\max(\lambda_2, C_1)} & C_1>0.
\end{cases}\end{equation*}
Here and in the steps that follow, the labels (a), (b), (c), (d) refer to the cases in Figure \ref{proof4}.

Therefore, the sum of weights of all possible modified fillings of Row 1, from taking the sum over all choices of $0\leq C_1\leq \lambda_1$, is
\begin{equation*}W_1 = \alpha^{\lambda_1-\lambda_2}(1+\lambda_2\beta)+\beta \sum_{\ell=0}^{\lambda_1-\lambda_2-1} \alpha^{\ell}.\end{equation*}
$W_1 \times \det \ A^{\alpha,\beta}_{(\lambda_2,\ldots,\lambda_k)}$ is the sum of the weights of the combination of all choices for the south-step in the top row, with all possibilities for Catalan paths that start in the northeast corner of the shape $(\lambda_2,\ldots,\lambda_k)$. Here all possible Catalan paths have been accounted for, but we have also included some illegal collections of south-steps that violate Rule 1 at rows 1 and 2.
 
 \textbf{Step 2:} We subtract out illegal combinations of south-steps where $C_1<C_2$ for $C_1$ the location of the south-step in Row 1, and $C_2$ that of Row 2. 
 The ways of selecting this illegal pair of south-steps in rows 1 and 2 is simply a choice of two disjoint columns $C_1$ and $C_2$ with $C_1<C_2$ such that the top row gets a south-step in column $C_1$ and the row below it gets a south-step in  column $C_2$. Figure \ref{proof4} with $j=2$ gives the different cases for the possibilities for $C_1$ and $C_2$ with $C_1<C_2$, with the following weight contributions:
 \begin{equation*}\begin{cases}
\mbox{(a), (b) } \qquad \beta\alpha^{\lambda_1-\max(\lambda_3, C_2)} & C_1=0\\
\mbox{(c), (d) } \qquad \beta^2\alpha^{\lambda_1-\max(\lambda_3, C_2)} & C_1 >0
 \end{cases}\end{equation*}
 
 Taking the sum over all choices of $C_1$ and $C_2$ with $0\leq C_1<C_2\leq \lambda_2$, we obtain
 \begin{equation*} W_2=\beta\alpha^{\lambda_1-\lambda_3}\left({\lambda_3 \choose 1} + \beta {\lambda_3 \choose 2} \right) + \beta\alpha^{\lambda_1-\lambda_2}\sum_{\ell=0}^{\lambda_2-\lambda_3-1}\alpha^{\ell} \left(1+\beta{\lambda_2-\ell-1 \choose 1} \right)\end{equation*}
 $W_2 \times \det \ A^{\alpha,\beta}_{(\lambda_3,\ldots,\lambda_k)}$ is the sum of the weights of all combinations of violations of Rule 1 at rows 1 and 2 with all possible Catalan paths starting from row 3. This is the second term in the expansion of the determinantal formula.

\textbf{Step 3:} We now add back in those combinations of south-steps where Rule 1 is violated not only in rows 1 and 2, but also in rows 2 and 3 (i.e. of the form shown in Figure \ref{illegal} (c)). We enumerate these combinations by selecting $0 \leq C_1<C_2<C_3\leq \lambda_3$ where $C_i$ is the location of the south-step in row $i$. Figure \ref{proof4} for $j=3$ shows the different cases for $C_1, C_2,C_3$ with $C_1<C_2<C_3$, with the following weight contributions:
\begin{equation*}\begin{cases}
 \mbox{(a), (b) } \qquad \beta^2\alpha^{\lambda_1-\max(\lambda_4,C_3)} & C_1=0\\

\mbox{(c), (d) } \qquad \beta^3\alpha^{\lambda_1-\max(\lambda_4,C_3)} & C_1 >0.
 \end{cases}\end{equation*}
 
 \begin{figure}[h]
\centering
\includegraphics[width=\textwidth]{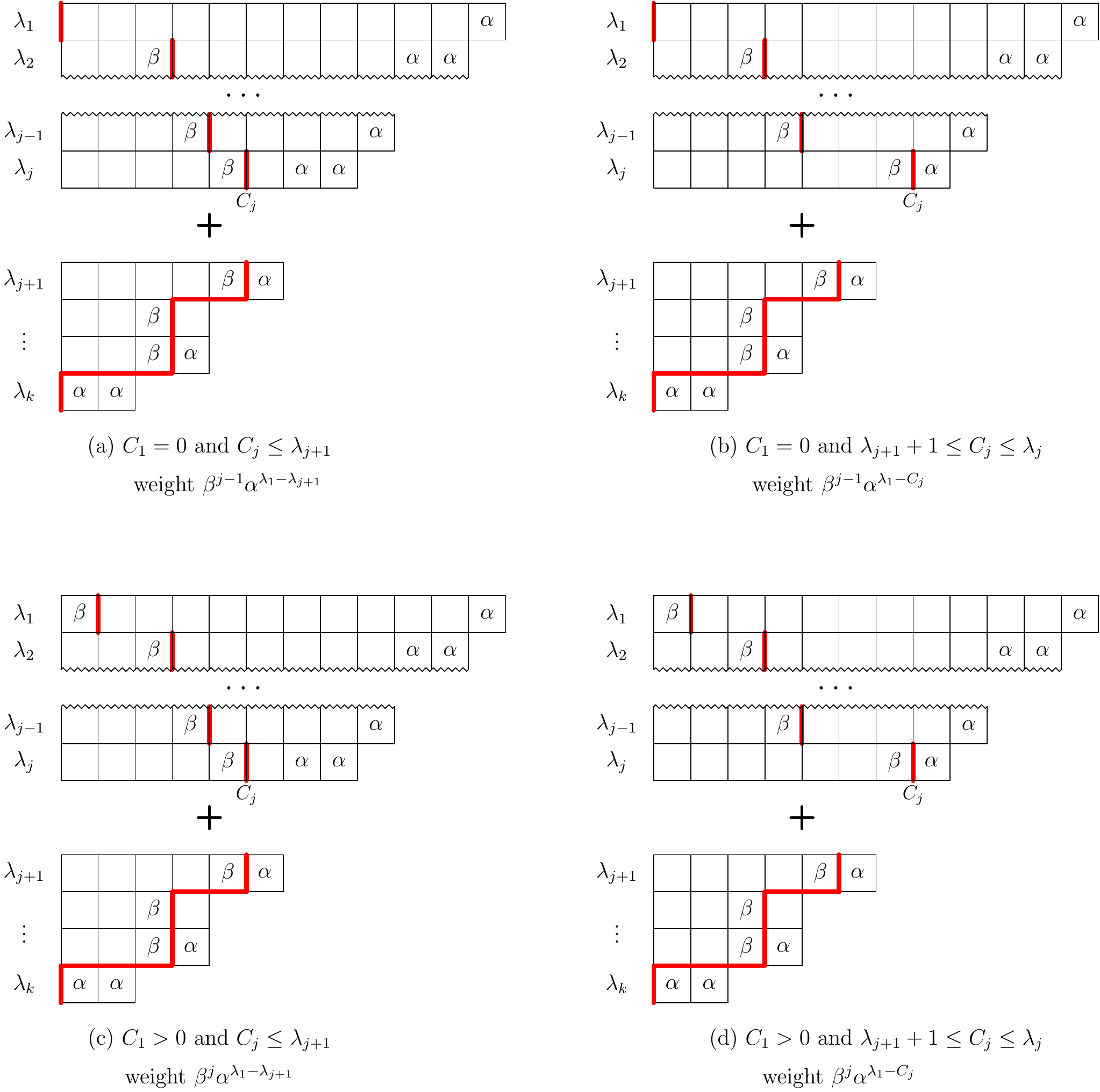}
\caption{The four cases of choosing a set of south-steps at locations $C_1<\cdots<C_j$ in corresponding rows $1,\ldots,j$ and the weights associated to these. }
\noindent
\label{proof4}
\end{figure}
\clearpage

The sum over all such $C_1,C_2,C_3$ is:
\begin{equation*}W_3 = \beta^2\alpha^{\lambda_1-\lambda_4}\left({\lambda_4 \choose 2} + \beta {\lambda_4 \choose 3} \right) + \beta^2\alpha^{\lambda_1-\lambda_3}\sum_{\ell=0}^{\lambda_3-\lambda_4-1} \alpha^{\ell} \left({\lambda_3-\ell-1 \choose 1}+\beta{\lambda_3-\ell-1 \choose 2} \right).\end{equation*}
The possibilities for all paths starting from row 4 are given by $\det A^{\alpha,\beta}_{(\lambda_4,\ldots,\lambda_k)}$, so the product with $W_3$ is the third term of the determinantal expansion.

\textbf{Step 4 through $\boldsymbol{k}$:} We repeat the above for $k$ steps, where at step $j$ we subtract or add the combinations of south-steps that violated Rule 1 in rows 1 and 2, 2 and 3, \ldots, and $(j-1)$ and $j$. We enumerate these combinations by selecting $0 \leq C_1<\cdots<C_j \leq \lambda_3$ where $C_i$ is the location of the south-step in row $i$. Figure \ref{proof4} shows the different cases for $C_1,\ldots,C_j$ with $C_1< \cdots<C_j$, with the following weight contributions:
\begin{equation*}\begin{cases}
\mbox{(a), (b) } \qquad  \beta^{j-1}\alpha^{\lambda_1-\max(\lambda_{j+1},C_j)} & C_1=0\\
 \mbox{(c), (d) } \qquad \beta^j\alpha^{\lambda_1-\max(\lambda_{j+1},C_j)} & C_1 >0 . \end{cases}\end{equation*}

The sum over all such $C_1,\ldots,C_j$ is:
\begin{multline*}
 W_j =\beta^{j-1}\alpha^{\lambda_1-\lambda_{j+1}}\left({\lambda_{j+1} \choose j-1} + \beta {\lambda_{j+1} \choose j} \right) \\
+ \beta^{j-1}\alpha^{\lambda_1-\lambda_j}\sum_{\ell=0}^{\lambda_j-\lambda_{j+1}-1} \alpha^{\ell} \left({\lambda_j-\ell-1 \choose j-2}+\beta{\lambda_j-\ell-1 \choose j-1} \right).
\end{multline*}
This is because, once we have chosen $C_j$, there remain ${C_j-1 \choose j-2}$ choices for $\{C_2,\ldots,C_{j-1}\}$ such that $1\leq C_2<\cdots<C_{j-1}<C_j$ when $C_1=0$, and ${C_j-1 \choose j-1}$ choices for $\{C_1,\ldots,C_{j-1}\}$ such that $1\leq C_1< C_2<\cdots<C_{j-1}<C_j$ when $C_1>0$, and the formula above is obtained by taking the sum over all $C_j$.

 Hence at step $j$, we add the product $(-1)^{j-1}W_j \times \det A^{\alpha,\beta}_{(\lambda_{j+1},\ldots,\lambda_k)}$, the $j$'th term in the determinantal expansion --- thus we have accounted for all the terms in the determinantal formula.
 \end{proof}
 
 \begin{rem} Given that Theorem \ref{main_thm} is a determinantal formula that counts lattice paths, it is tempting to try to prove it using the Karlin-McGregor-Lindstr\"{o}m-Gessel-Viennot Theorem. However, that theorem interprets the determinant of a $k\times k$ matrix in terms of collections of $k$ paths, while in our setting we have a $k \times k$ matrix whose determinant counts \emph{single} paths on shapes of $k$ rows. 
\end{rem}

\begin{cor}
The un-normalized steady state probability that the TASEP with $n$ sites has particles in precisely the locations $1 \leq x_1 < \cdots <x_k\leq n$ is:

\begin{equation*}\mathcal{P}\left[ \{x_1,\ldots,x_k\} \right] = \det A_{\lambda}^{\alpha,\beta} \end{equation*}
 where $A_{\lambda}^{\alpha,\beta}$ is given by
 \begin{multline*}
 A_{ij} = 
 \beta^{j-i}\alpha^{i-(j+1)+x_{j+1}-x_i}\left( {n-k+j+1-x_{j+1} \choose j-i} + \beta {n-k+j+1-x_{j+1} \choose j-i+1} \right)\\
 + \beta^{j-i} \alpha^{i-j+x_j-x_i} \sum_{\ell=0}^{x_{j+1}-x_j-1} \alpha^{\ell} \left( {n-k+j-x_j-\ell-1 \choose j-i-1} + \beta {n-k+j-x_j-\ell-1 \choose j-i} \right)
 \end{multline*}

 \end{cor}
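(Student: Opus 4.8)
The plan is to recognize this corollary as nothing more than a translation of Theorem~\ref{main_thm} into the particle coordinates of the TASEP, mediated by the Corteel--Williams correspondence of Theorem~\ref{cw_thm}. First I would fix the state $\tau\in\{\bullet,\circ\}^n$ obtained by placing particles at the sites $x_1<\cdots<x_k$ and holes elsewhere. By Theorem~\ref{cw_thm}, the un-normalized steady state probability of this configuration is $P_\tau(\alpha,\beta)=\sum_T\wt(T)$, summed over Catalan tableaux of type $\tau$; passing to the equivalent Condensed Catalan tableaux, this is exactly the weight generating function $P_\lambda(\alpha,\beta)$ computed in Theorem~\ref{main_thm} for the shape $\lambda=\lambda(\tau)$. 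So the content of the corollary is purely a change of variables from the partition $\lambda$ to the positions $x_1,\dots,x_k$.

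The central step is to set up the dictionary between the particle positions and the parts of $\lambda$. Following the reading rule defining $\lambda(\tau)$, the part $\lambda_i$ is the number of holes lying to the right of the $i$th particle. Since there are $n-x_i$ sites to the right of site $x_i$, of which exactly $k-i$ are occupied, I would record the key identity
\begin{equation*}
\lambda_i=(n-x_i)-(k-i)=n-k+i-x_i .
\end{equation*}
Before substituting I would check this gives a legitimate partition inside the $k\times(n-k)$ rectangle: one has $\lambda_i-\lambda_{i+1}=x_{i+1}-x_i-1\ge 0$ since $x_{i+1}>x_i$, and $\lambda_k=n-x_k\ge 0$ since $x_k\le n$, while $\lambda_1=n-k+1-x_1\le n-k$. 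With this in hand, the rest is mechanical substitution of $\lambda_i=n-k+i-x_i$, $\lambda_j=n-k+j-x_j$, and $\lambda_{j+1}=n-k+j+1-x_{j+1}$ into the entries $A_{ij}$ of Theorem~\ref{main_thm}: the two $\alpha$-exponents become $\lambda_i-\lambda_{j+1}=i-(j+1)+x_{j+1}-x_i$ and $\lambda_i-\lambda_j=i-j+x_j-x_i$, the binomial upper arguments become $\lambda_{j+1}=n-k+j+1-x_{j+1}$ and $\lambda_j-\ell-1=n-k+j-x_j-\ell-1$, and the summation range rewrites via $\lambda_j-\lambda_{j+1}=x_{j+1}-x_j-1$. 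Matching these term by term against the stated formula for $A_{ij}$ completes the argument; the determinant is literally unchanged, so its interpretation as the weighted Catalan path enumeration carries over verbatim.

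The two places I would be most careful are bookkeeping rather than conceptual. The first is the off-by-one in the summation limit: because $\lambda_j-\lambda_{j+1}=x_{j+1}-x_j-1$, the index bound $\sum_{\ell=0}^{\lambda_j-\lambda_{j+1}-1}$ must be re-expressed in the $x$-variables with the $-1$ tracked exactly, and I would double-check this endpoint against a small case such as $k=1$ or $k=2$. The second is the $\alpha^k\beta^{n-k}$ boundary prefactor in Theorem~\ref{main_thm}: since $P_\tau(\alpha,\beta)=\alpha^k\beta^{n-k}\det A_\lambda^{\alpha,\beta}$ and this weight depends on $k$, I would state explicitly how the ``un-normalized'' quantity $\mathcal{P}[\{x_1,\dots,x_k\}]$ in the corollary is normalized relative to $P_\tau$, so that the prefactor is accounted for consistently. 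Beyond these two checks no new idea is required, which is why I expect the genuine difficulty here to be essentially zero: the corollary is a corollary.
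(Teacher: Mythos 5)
Your proposal is correct and takes essentially the same route as the paper's own one-paragraph proof: identify the state with a word $\tau$, note that the corresponding Condensed Catalan tableaux have shape $\lambda(\tau)$ with $\lambda_j = n-k+j-x_j$ (the number of holes right of the $j$th particle), and substitute into Theorem \ref{main_thm}. Your two bookkeeping flags are moreover well taken, since direct substitution gives the upper summation limit $\lambda_j-\lambda_{j+1}-1 = x_{j+1}-x_j-2$ rather than the printed $x_{j+1}-x_j-1$, and the prefactor $\alpha^k\beta^{n-k}$ from Theorem \ref{main_thm} does not appear in the displayed formula --- both apparent slips in the stated corollary that the paper's terse proof passes over without comment.
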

 
\begin{proof} We refer to Theorem \ref{cw_thm} to connect back to the TASEP from the Catalan tableaux. A TASEP state of length $n$ with $k$ particles in locations $\{x_1,\ldots,x_k\}$ corresponds to a word $\tau$ in $\{\bullet,\circ\}^n$ with the $i$th $\bullet$ in location $x_i$. From Definition \ref{condensed_defn}, this state corresponds to Catalan tableaux of shape $\lambda(\tau)=(n-k+1-x_1,n-k+2-x_2,\ldots,n-k+k-x_k)$. Equivalently, $\lambda(\tau)=(\lambda_1,\ldots,\lambda_k)$ where $\lambda_j$ is the number of holes to the right of particle $j$, meaning $\lambda_j=n-k+j-x_j$. Thus Theorem \ref{main_thm} gives the desired formula. 
\end{proof}

\FloatBarrier
\section{Formula for the Condensed Catalan tableaux of size $(k,k+m)$} \label{sec_enum}

Let $N_{m,k}(\alpha,\beta)$ be the weight generating function for the Condensed Catalan tableaux of size $(k,k+m)$. In other words, their associated Young diagrams are contained in a $k\times m$ rectangle.

Let $N'_{m',k'}(\alpha,\beta)$ be the weight generating function for the Condensed Catalan tableaux whose Young diagrams have first row equal to $m'$ and which have precisely $k'$ rows. In other words the Young diagram can be described by the partition $\lambda'=(\lambda'_1,\ldots,\lambda'_{k'})$ where $1 \leq \lambda'_{k'} \leq \cdots \leq \lambda'_1 = m'$. The following gives the relation between $N_{m,k}(\alpha,\beta)$ and $N'_{m',k'}(\alpha,\beta)$:
\begin{equation}\label{sum}
N_{m,k}(\alpha,\beta) = \alpha^{k}\beta^{m} \sum_{m'=0}^m \sum_{k'=0}^k \frac{1}{\alpha^{k'}\beta^{m'}} N'_{m',k'}(\alpha,\beta).
\end{equation}
Here we multiplied by a factor of $\alpha^k \beta^m$ to account for the weight of the lattice path $L(T)$ that is associated with a Condensed Catalan tableau $T$ of size $(k,k+m)$.

Enumerating all the Condensed Catalan tableaux of size $(k,k+m)$ whose Young diagrams have $k$ nonzero rows and first row of length $m$ is equivalent to taking the sum 
\begin{equation*}
N'_{m,k}(\alpha,\beta) = \alpha^k\beta^m \sum_{1 \leq \lambda_k \leq\cdots \leq \lambda_2 \leq m} \det \ A_{\{m,\lambda_2,\lambda_3,\ldots,\lambda_k)}.
\end{equation*}
The above gives rise to the following Theorem.

\begin{thm}\label{mbyk_thm} The weight generating function $N'_{m,k}(\alpha,\beta)$ equals
\begin{equation} \label{aux_formula}
\alpha^k \beta^m \sum_{\ell=0}^k \sum_{j=0}^m \alpha^j \beta^{\ell} \bigg( \binom{m+\ell-2+\delta_{jm}}{m-1}\binom{k+j-2+\delta_{\ell k}}{k-1}-\binom{m+\ell-2+\delta_{jm}}{m}\binom{k+j-2+\delta_{\ell k}}{k} \bigg)
\end{equation}
where $\delta_{rs}$ is the Kronecker $\delta$.
\end{thm}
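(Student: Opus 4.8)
The plan is to evaluate the inner sum in the identity
\[
N'_{m,k}(\alpha,\beta) = \alpha^k\beta^m \sum_{1\le\lambda_k\le\cdots\le\lambda_2\le m}\det A^{\alpha,\beta}_{(m,\lambda_2,\ldots,\lambda_k)}
\]
combinatorially rather than by a direct algebraic manipulation of the determinant. By Theorem~\ref{main_thm} together with Proposition~\ref{bijprop}, $\det A^{\alpha,\beta}_{\lambda}=\sum_P\wt(P)$, where $P$ ranges over weighted Catalan paths constrained by $\lambda$. A Catalan path constrained by $\lambda$ is precisely the southeast border of a sub-diagram $\mu\subseteq\lambda$, so after summing over all $\lambda$ with $\lambda_1=m$ and exactly $k$ rows the left-hand side becomes a weighted sum over nested pairs $\mu\subseteq\lambda$ inside the $k\times m$ rectangle, where $\lambda$ fills the entire top row and the entire left column. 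Reading off the edge labels in the definition of a Catalan path, $\wt(P)=\alpha^{j}\beta^{\ell}$, where $\ell$ is the number of nonempty rows of $\mu$ and $j$ is the number of columns in which $\mu$ is strictly shorter than $\lambda$ (a west-step of $P$ lies strictly above $L(\lambda)$ exactly in such a column). Thus it suffices to prove that, for each $j$ and $\ell$, the number $C_{j,\ell}$ of nested pairs with these two statistics equals the bracketed expression in \eqref{aux_formula}.

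To count $C_{j,\ell}$ I would encode a pair $\mu\subseteq\lambda$ by its two column-height sequences: writing $a_c$ and $b_c$ for the heights of the $c$-th columns of $\lambda$ and $\mu$, the data becomes two weakly decreasing sequences $k=a_1\ge\cdots\ge a_m\ge1$ and $\ell=b_1\ge\cdots\ge b_m\ge0$ with $b_c\le a_c$, and $j=\#\{c:b_c<a_c\}$; these are two non-crossing lattice paths in the rectangle. The factor $\binom{m+\ell-2+\delta_{jm}}{m-1}$ should enumerate the admissible sequences $b$ (the shape $\mu$, which has $\ell$ rows and at most $m$ columns), while $\binom{k+j-2+\delta_{\ell k}}{k-1}$ should enumerate the ways the taller sequence $a$ rises above $b$ on exactly the $j$ strict columns, up to the fixed top height $k$. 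A freely chosen pair of such paths need not satisfy the joint nesting and monotonicity constraint, so the product overcounts; the over-counted configurations should biject, via a reflection across the appropriate diagonal, with pairs counted by the shifted product $\binom{m+\ell-2+\delta_{jm}}{m}\binom{k+j-2+\delta_{\ell k}}{k}$, which is exactly the subtracted term. The two Kronecker deltas appear as boundary corrections: the first column is forced to be tight precisely when $\ell=k$ (producing $\delta_{\ell k}$), and the full-width case $j=m$ forces the last column to behave specially (producing $\delta_{jm}$).

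The main obstacle is this final enumeration: the statistic $j=\#\{c:b_c<a_c\}$ is not an edge weight, so the product-minus-product cannot be obtained from a single application of the Lindström–Gessel–Viennot lemma (indeed the two binomial factors live in grids of different dimensions, $(m-1)\times(\ell-1)$ and $(k-1)\times(j-1)$, and no common choice of sources and sinks reproduces all four entries). The technical heart is therefore to construct an explicit decomposition of a nested pair into a ``$\mu$-part'' and a ``skew-part,'' to show that the nesting condition is equivalent to a single ballot inequality between them, and to carry out the reflection that yields the shifted product together with the correct $\delta$-corrections at the two extreme columns. I would first settle the boundary cases $\ell=k$ and $j=m$ by hand to pin down the deltas, check the formula on a small rectangle such as $m=k=2$ to guard against sign and off-by-one errors, and only then give the general reflection. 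As a fallback, one can instead show that both sides satisfy the same recursion obtained by peeling off the top row of $\lambda$ (mirroring the top-row determinant expansion used to prove Theorem~\ref{main_thm}), with matching initial data at $k=1$, where the formula reduces to the single-row computation $\alpha^{\lambda_1}+\beta\sum_{i}\alpha^{i}$ summed over $\lambda_1\le m$.
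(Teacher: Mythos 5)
Your opening reduction is correct and genuinely different from the paper: using Proposition~\ref{bijprop} and Theorem~\ref{main_thm}, the sum $\sum_{\lambda}\det A^{\alpha,\beta}_{(m,\lambda_2,\ldots,\lambda_k)}$ does become a weighted count of nested pairs $\mu\subseteq\lambda$ in the $k\times m$ rectangle with $\lambda_1=m$, $\lambda_k\geq 1$, and your identification of the exponents ($\ell=$ number of nonempty rows of $\mu$, counting the $\beta$-labelled south-steps; $j=\#\{c:b_c<a_c\}$, counting the $\alpha$-labelled west-steps strictly above $L$) checks out. But from that point on you have a plan, not a proof, and the plan sits exactly at the theorem's technical heart. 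The claimed factorization of the bracket into ``a $b$-count times an $a$-count'' is not established and is not even right as stated: the number of partitions with exactly $\ell$ nonempty rows and width at most $m$ is $\binom{m+\ell-1}{\ell}$, whereas $\binom{m+\ell-2}{m-1}=\binom{m+\ell-2}{\ell-1}$ counts those whose \emph{first row is exactly} $m$, so your proposed meaning of the first factor is already off by a boundary condition, which is presumably where the $\delta$'s live --- but you only guess at this. More seriously, as you yourself observe, the statistic $j$ couples the two paths (it is not an edge weight of either), so the independence implicit in writing $C_{j,\ell}$ as a product of two separately-enumerated objects, and the existence of a reflection producing the shifted product $\binom{m+\ell-2+\delta_{jm}}{m}\binom{k+j-2+\delta_{\ell k}}{k}$, are both open in your write-up. ``The technical heart is therefore to construct\ldots'' is an accurate self-assessment: that construction is the proof, and it is missing.

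For contrast, the paper avoids the nested-pair enumeration entirely and never touches the determinant in this proof. It inducts by peeling off a \emph{hook} (the full top row of length $m$ together with the full first column of length $k$), reducing a shape with $\lambda_1=m$ and $k$ nonzero rows to an arbitrary Condensed Catalan tableau in a $(k-1)\times(m-1)$ rectangle. The key computation is the hook's weight generating function $H^{m,k}_{p,q}$ as a function of the number of free ($\alpha$-indexed) rows and free ($\beta$-indexed) columns of the inner tableau, which are determined by the inner tableau's $\alpha$- and $\beta$-degrees; the recursion \eqref{N'} then couples $N'_{m,k}$ to the rectangle generating function $N_{m-1,k-1}$ of \eqref{main_formula} (so the induction carries \eqref{aux_formula} and \eqref{main_formula} along together via \eqref{sum}), and closes using $\sum_{i=0}^{a}\binom{b+i}{c}=\binom{b+a+1}{c+1}-\binom{b}{c+1}$, with base cases $m=1$ and $k=1$ checked directly. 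Your fallback suggestion --- matching recursions by peeling only the top row --- is closer in spirit to this but is likewise unexecuted, and note that it would relate $N'_{m,k}$ to sums of $N'_{m',k-1}$ rather than to the single clean term $N_{m-1,k-1}$ the hook gives. If you want to salvage your route, either carry out the nested-pair enumeration honestly (settle the exact objects each binomial counts, including the $\delta_{jm}$, $\delta_{\ell k}$ boundary cases and the sign conventions for degenerate binomials, then produce the involution for the subtracted term), or switch to the hook decomposition, where the weight analysis is a short case check.
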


Summation of \eqref{aux_formula} according to \eqref{sum} yields the following:
\begin{equation} \label{main_formula} N_{m,k}(\alpha,\beta) = \alpha^k\beta^m \sum_{j=0}^{m} \sum_{\ell=0}^{k} \alpha^j \beta^{\ell} \left({k+j-1 \choose j} {m+\ell-1 \choose \ell} -  {k+j-1 \choose j-1} {m+\ell-1 \choose \ell-1}\right) 
\end{equation}

\begin{proof} We prove Formula \eqref{aux_formula} by induction on $m$ and $k$. As seen in Figure \ref{hook_recursion}, a Young diagram with $k$ nonzero rows and with first row of length $m$ can be formed by the addition of a $k-m$ hook with a row of length $m$ and column of length $k$ to the top and left edges of a Condensed Catalan tableau contained in a $k-1 \times m-1$ rectangle.

\begin{figure}[h]
\centering
\includegraphics[width=0.5\textwidth]{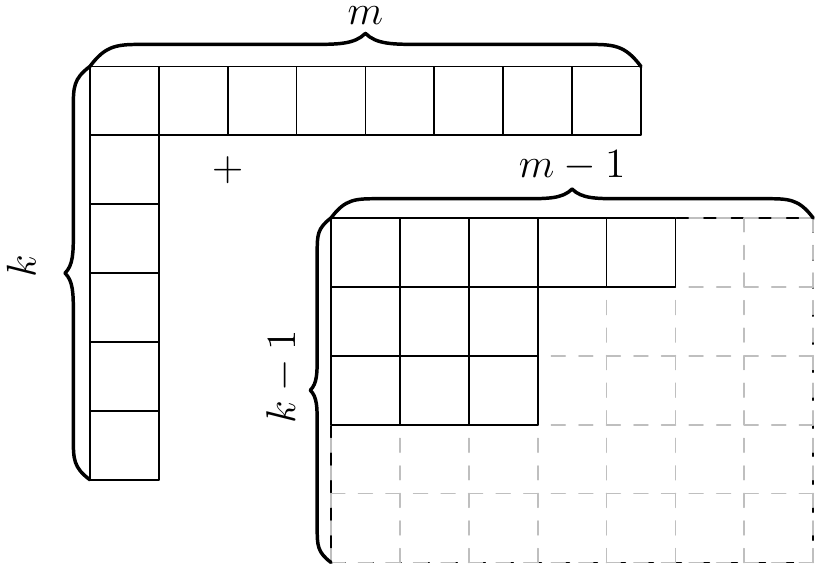}
\caption{Constructing a Catalan tableau with $k$ nonzero rows and first row of length $m$ by adding a $k-m$ hook. }
\noindent
\label{hook_recursion}
\end{figure}

Let $H^{m,k}_{p,q}$ be the sum of the weights of the possible fillings of the $k-m$ hook, when the inside tableau has $p$ rows that are $\alpha$-indexed and $q$ columns that are $\beta$-indexed. If the inside tableau has weight $\alpha^j \beta^{\ell}$, then it must contain $\ell$ $\beta$'s, and so there are $k-1-\ell$ rows that are $\alpha$-indexed since there is always at most one $\beta$ per row. By a similar argument, the inside tableau contains $j$ $\alpha$'s, and hence then there must be $m-1-j$ columns that are $\beta$-indexed, since there is always at most one $\alpha$ per column. Figure \ref{hook} shows the cases that result in the following expression:
\begin{equation}\label{hook_weights}
H^{m,k}_{k-1-\ell,\ m-1-j} = \alpha^{m-j}\sum_{s=0}^{k-\ell-1} \beta^s + \beta^{k-\ell}\sum_{t=0}^{m-j-1} \alpha^t + \sum_{t=1}^{m-j-1}\sum_{s=1}^{k-\ell-1} \alpha^t \beta^s.
\end{equation}
\begin{figure}[h]
\centering
\includegraphics[width=\textwidth]{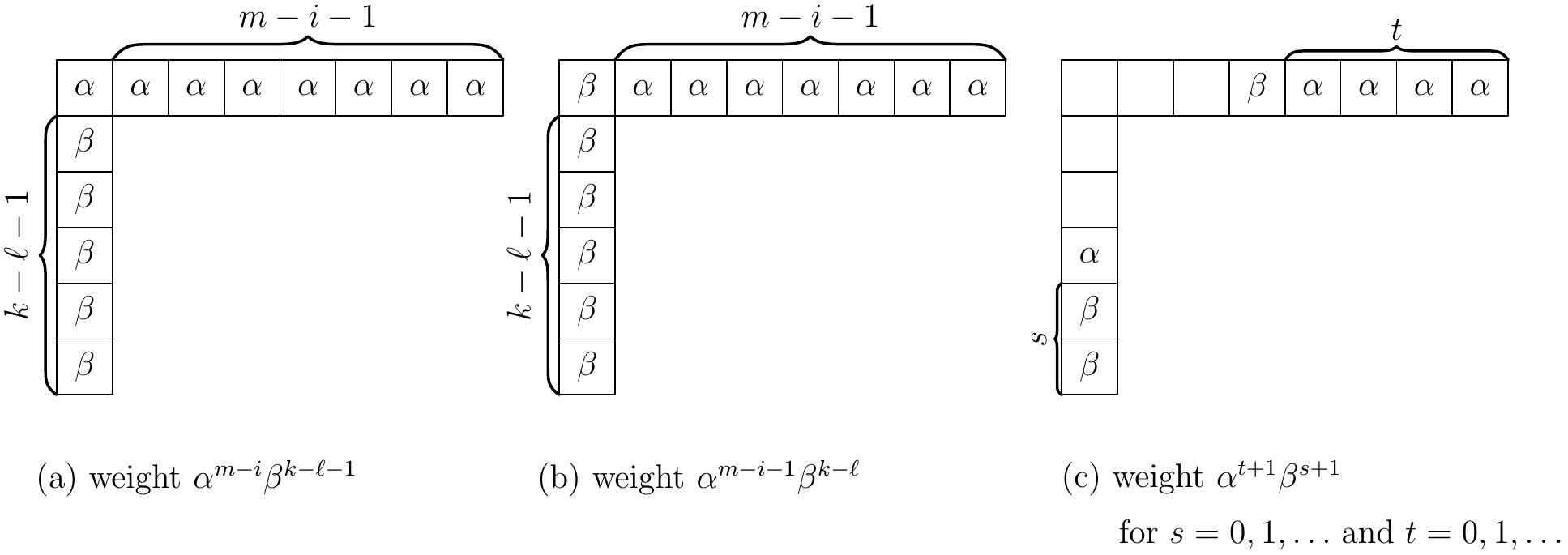}
\caption{The weights for the three cases for Catalan fillings of a $k-m$ hook with $k-1-\ell$ free rows and $m-1-j$ free columns.}
\noindent
\label{hook}
\end{figure}

Recall that if $f(\alpha,\beta)$ is a polynomial in $\alpha$ and $\beta$, then $[\alpha^j\beta^{\ell}]f(\alpha,\beta)$ denotes the coefficient of $\alpha^j\beta^{\ell}$ in $f(\alpha,\beta)$.

Hence for $m,k \geq 2$ we obtain the following recursion:
\begin{equation}\label{N'}
N'_{m,k}(\alpha,\beta) = \alpha^k\beta^m \sum_{j=0}^{m-1} \sum_{\ell=0}^{k-1} H^{m,k}_{k-1-\ell,\ m-1-j}\alpha^j\beta^{\ell}\  \left[\alpha^j \beta^{\ell}\right] \frac{1}{\alpha^{k-1}\beta^{m-1}}N_{m-1,k-1}(\alpha,\beta).
\end{equation}
Note that the coefficient of $\alpha^j \beta^{\ell}$ in $\frac{1}{\alpha^{k-1}\beta^{m-1}}N_{m-1,k-1}(\alpha,\beta)$ gives the number of tableaux contained in an $m-1 \times k-1$ rectangle with $j$ $\alpha$-indexed columns and $\ell$ $\beta$-indexed rows. By the induction hypothesis and from \eqref{main_formula} we know that to be 
\begin{equation*}{k+j-2 \choose j} {m+\ell-2 \choose \ell} -  {k+j-2 \choose j-1} {m+\ell-2 \choose \ell-1}.\end{equation*}
The recursion is now straightforward to verify. On the right hand side of \eqref{N'}, we have
\begin{align*}
& \alpha^k\beta^m \left[ \alpha^m \sum_{l=0}^{k-1} \beta^l  \left( {m+l-1 \choose m-1}{k+m-2 \choose k-1} - {m+l-1 \choose m}{k+m-2 \choose k} \right) \right.\\
&\quad\qquad + \beta^{k} \sum_{j=0}^{m-1} \alpha^j\left( {m+k-2 \choose m-1}{k+j-1 \choose k-1} - {m+k-2 \choose m}{k+j-1 \choose k} \right) \\
&\quad\qquad \left.+ \sum_{l=1}^{k-1} \sum_{j=1}^{m-1} \alpha^j\beta^l  \left( {m+l-2 \choose m-1}{k+j-2 \choose k-1} - {m+l-2 \choose m}{k+j-2 \choose k} \right) \right],
\end{align*}
where we have used that
$\sum_{i=0}^{a} {b+i\choose c} = {b+a+1 \choose c+1} - {b \choose c+1}.$ 

This formula equals \eqref{aux_formula}, which is the left hand side of \eqref{N'} that we desire.

It remains to check the base cases for $N'_{m,k}(\alpha,\beta)$ when $m=1$ or $k=1$. If we plug $m=1$ into \eqref{aux_formula}, we obtain 
\begin{equation*}N'_{1,k}(\alpha,\beta) = \alpha^k \beta \left(\beta^k+\alpha\sum_{\ell=0}^{k-1} \beta^{\ell}\right),\end{equation*}
which is the sum of the weights of Condensed Catalan tableaux of the shape $\lambda=(1,\ldots,1)$ of $k$ rows. Similarly, plugging $k=1$ into \eqref{aux_formula} yields
\begin{equation*}N'_{m,1}(\alpha,\beta) = \alpha \beta^m \left(\alpha^m+\beta\sum_{i=0}^{m-1} \alpha^i\right),\end{equation*}
which is the sum of the weights of Condensed Catalan tableaux of the shape $\lambda=(m)$, and so the proof is complete.
\end{proof}

\begin{defn} Let $Z_n(\alpha,\beta)=\sum_{k=0}^n N_{n-k,k}(\alpha,\beta)$ be the weight generating function for the Condensed Catalan tableaux of size $n$, or equivalently, all Condensed Catalan tableaux that fit in a rectangle of semi-perimeter $n$. 
\end{defn}

\begin{rem}Derrida provides the following formula in \cite{derrida}: 
\begin{equation}\label{Z} 
Z_n(\alpha,\beta) = \alpha^n\beta^n \sum_{p=1}^n \frac{p}{2n-p} {2n-p \choose n} \frac{\alpha^{-p-1}-\beta^{-p-1}}{\alpha^{-1}-\beta^{-1}}.
\end{equation}
This expression normalizes the previously derived stationary probabilities of the TASEP, as we see below in Corollary \ref{cor5.2}. 

Derrida's formula can be derived from \eqref{main_formula} as follows: 
\begin{align}\label{vandermonde}
 [\alpha^{n-t}\beta^{n+t-s}] \sum_{k=0}^n N_{n-k,k}& = \sum_{k=0}^n [\alpha^{n-t}\beta^{n+t-s}] \sum_{j=0}^{n-k} \sum_{\ell=0}^{k} \alpha^{j+k} \beta^{\ell+n-k} \left({k+j-1 \choose k-1} {n-k+\ell-1 \choose n-k-1}\right. \nonumber\\
&\qquad -\left.  {k+j-1 \choose k} {n-k+\ell-1 \choose n-k}\right)\nonumber\\
& = \frac{s}{2n-s}{2n-s \choose n}.
\end{align}

where in the final step the Vandermonde convolution is used.

Since \eqref{vandermonde} is independent of $t$, we obtain \eqref{Z} by summing over $k$.
\end{rem}

\begin{cor}\label{cor5.2}
The stationary probability of a TASEP of length $n$ and containing exactly $k$ particles is $N_{n-k,k}(\alpha,\beta)$ of \eqref{main_formula}, normalized by $Z_{n}(\alpha,\beta)$ from \eqref{Z}.
\end{cor}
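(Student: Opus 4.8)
The plan is to reduce the statement directly to the Corteel--Williams characterization in Theorem~\ref{cw_thm} by summing over all TASEP states with exactly $k$ particles. First I would observe that the event ``the TASEP on $n$ sites has exactly $k$ particles'' is the disjoint union of the single-state events $\{\tau\}$ over all words $\tau \in \{\bullet,\circ\}^n$ containing exactly $k$ copies of $\bullet$. Hence the stationary probability of this event is $\sum_{\tau}\Pr(\tau)$, where the sum runs over all such $\tau$.

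Next I would apply Theorem~\ref{cw_thm} to each term and combine the two layers of summation. Since $\Pr(\tau)=\frac{1}{Z_n}\sum_T \wt(T)$ with the inner sum over Catalan tableaux of type $\tau$, merging the outer sum over $\tau$ with the inner sum over tableaux of type $\tau$ produces $\frac{1}{Z_n}\sum_T \wt(T)$, where now $T$ ranges over all Catalan tableaux of size $n$ whose diagonal carries exactly $k$ $\alpha$'s (equivalently, exactly $k$ occurrences of $\bullet$ in the type). The bookkeeping point I would make explicit is that each such tableau is counted exactly once, because the ``type'' map partitions the size-$n$ Catalan tableaux by their diagonal reading.

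Then I would invoke the weight-preserving equivalence between staircase Catalan tableaux and Condensed Catalan tableaux established in Section~\ref{sec_catalan}: a Catalan tableau of size $n$ with $k$ $\alpha$'s on its diagonal corresponds to a Condensed Catalan tableau of size $(k,n)$, i.e.\ one whose Young diagram sits in a $k \times (n-k)$ rectangle. By the definition of $N_{m,k}(\alpha,\beta)$ with $m=n-k$, the numerator $\sum_T \wt(T)$ over precisely these tableaux is $N_{n-k,k}(\alpha,\beta)$. For the denominator I would note that the quantity $Z_n=\sum_T \wt(T)$ over all Catalan tableaux of size $n$ equals $\sum_{k=0}^{n} N_{n-k,k}(\alpha,\beta)=Z_n(\alpha,\beta)$, which by the preceding remark coincides with Derrida's expression~\eqref{Z}. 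Assembling these identifications gives the claimed formula $N_{n-k,k}(\alpha,\beta)/Z_n(\alpha,\beta)$.

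The argument is a chain of definitional identifications, so I do not anticipate a genuine obstacle; the one step demanding care is the summation merge in the second paragraph, where I must confirm that summing the Corteel--Williams formula over all types with $k$ particles reassembles exactly the weight generating function $N_{n-k,k}$ with neither overcounting nor omission. This rests squarely on the fact that the type map grades the size-$n$ Catalan tableaux by the number of $\alpha$'s on the diagonal, and that this grading matches the $(k,n)$ grading of the Condensed tableaux under the equivalence.
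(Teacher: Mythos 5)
Your proposal is correct and is essentially the paper's own (implicit) argument: the paper states Corollary~\ref{cor5.2} without a separate proof precisely because it follows, as you spell out, by summing the Corteel--Williams formula of Theorem~\ref{cw_thm} over all states with $k$ particles, identifying the resulting tableaux (via the weight-preserving staircase/condensed equivalence of Section~\ref{sec_catalan}, where the $k\times(n-k)$ rectangle encodes exactly $k$ $\bullet$'s in the type) with the Condensed Catalan tableaux of size $(k,n)$ enumerated by $N_{n-k,k}(\alpha,\beta)$ in \eqref{main_formula}, and normalizing by $Z_n(\alpha,\beta)=\sum_{k=0}^n N_{n-k,k}(\alpha,\beta)$, which the preceding remark matches to Derrida's expression \eqref{Z}. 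Your care about the grading of size-$n$ tableaux by the number of diagonal $\alpha$'s is exactly the right bookkeeping point, and it holds for the reason you give.
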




\begin{table}
\begin{center} 
{\small 
\begin{tabular}{ | c | c || p{8cm} | p{6cm} | }
\hline
  $n$&$k$&$q^{-n}N_{n-k,k}(q,q)$&$q^{-n}N_{n-k,k}(q,1)=q^{-n}N_{k,n-k}(1,q)$\\
  \hline   \hline                    
  6&$1$ & $2q^5 + 3q^4 + 4q^3 + 5q^2 + 6q + 1$ & $q^5 + 2q^4 + 3q^3 + 4q^2 + 5q + 6$ \\
  6&$2$ & $20q^5 + 30q^4 + 28q^3 + 20q^2 + 6q + 1$ & $15q^4 + 24q^3 + 27q^2 + 24q + 15$ \\
  6&$3$ & $40q^5 + 60q^4 + 48q^3 + 20q^2 + 6q + 1$ & $50q^3 + 60q^2 + 45q + 20$ \\
  6&$4$ & $20q^5 + 30q^4 + 28q^3 + 20q^2 + 6q + 1$ & $50q^2 + 40q + 15$ \\
  6&$5$ & $2q^5 + 3q^4 + 4q^3 + 5q^2 + 6q + 1$ & $15q + 6$ \\
  \hline
  7&$1$ & $2q^6 + 3q^5 + 4q^4 + 5q^3 + 6q^2 + 7q + 1$ & $q^6 + 2q^5 + 3q^4 + 4q^3 + 5q^2 + 6q + 7$ \\
  7&$2$ & $30q^6 + 45q^5 +46q^4 + 40q^3 + 27q^2 + 7q + 1$ & $21q^5 + 35q^4 + 42q^3
+ 42q^2 + 35q + 21$ \\
  7&$3$ & $100q^6 + 150q^5 + 130q^4 + 75q^3+ 27q^2 + 7q + 1$ & $105q^4 + 140q^3 + 126q^2 + 84q + 35$ \\
  7&$4$ & $100q^6 + 150q^5 + 130q^4 + 75q^3 + 27q^2 + 7q+ 1$ & $175q^3 +
175q^2 + 105q + 35$ \\
  7&$5$ & $30q^6 + 45q^5 + 46q^4 + 40q^3 + 27q^2 + 7q + 1$ & $105q^2 + 70q + 21$ \\
  7&$6$ & $2q^6 + 3q^5+ 4q^4 + 5q^3 + 6q^2 + 7q + 1$ & $21q + 7$ \\
  \hline
  8&$1$ & $2q^7 + 3q^6 + 4q^5 + 5q^4 + 6q^3 + 7q^2 + 8q + 1$ & $q^7 + 2q^6 + 3q^5 + 4q^4 + 5q^3 + 6q^2 + 7q + 8$ \\
  8&$2$ & $42q^7 +63q^6 + 68q^5 + 65q^4 + 54q^3 + 35q^2 + 8q + 1$ & $28q^6 + 48q^5
+ 60q^4 + 64q^3 + 60q^2 + 48q + 28$ \\
  8&$3$ & $210q^7 + 315q^6+ 292q^5 + 205q^4 + 110q^3 + 35q^2 + 8q + 1$ & $196q^5 + 280q^4 + 280q^3 +
224q^2 + 140q + 56$ \\
  8&$4$ & $350q^7 + 525q^6 +460q^5 + 275q^4 + 110q^3 + 35q^2 + 8q + 1$ & $490q^4 + 560q^3 + 420q^2 + 224q + 70$ \\
  8&$5$ & $210q^7 + 315q^6 +292q^5 + 205q^4 + 110q^3 + 35q^2 + 8q + 1$ & $490q^3
+ 420q^2 + 210q + 56$ \\
  8&$6$ & $42q^7 + 63q^6 + 68q^5+ 65q^4 + 54q^3 + 35q^2 + 8q + 1$ & $196q^2 + 112q + 28$ \\
  8&$7$ & $2q^7 + 3q^6 + 4q^5 + 5q^4 +6q^3 + 7q^2 + 8q + 1$ & $28q + 8$ \\
  \hline  
\end{tabular}} 
\caption{Specialization of the $\alpha/\beta$-Narayana numbers $N_{n-k,k}(\alpha,\beta)$.} \label{narayana_polys}
\end{center}
\end{table}

\begin{rem}[\cite{duchi, advances2005}] Using standard binomial identities it follows from Theorem \ref{mbyk_thm} that $N_{n-k,k}$ is an $\alpha\ /\ \beta$ generalization of the Narayana numbers $\mathcal{N}(n+1,k+1)$. That is, $N_{n-k,k}(\alpha=1,\beta=1) = \frac{1}{n+1} {n+1 \choose k}{n+1 \choose k+1}$. Consequently the total number of Catalan tableaux of size $n$ is $Z_n(1,1)=\frac{1}{n+2}{2n+2 \choose n+1}$, the Catalan number $C_{n+1}$.
\end{rem}

\begin{rem} The above remark can be extended to provide a $q$-refinement of the Narayana numbers by setting $\alpha=\beta=q$ or $\alpha=1$ and $\beta=q$ in \eqref{main_formula}. Table 1 shows some of the resulting $q$-polynomials. We include these particular specializations because the polynomials $N_{n-k,k}(\alpha,\beta)$ contain on the order of $n(n-k)$ terms, and they quickly get very long. 
\end{rem}



\begin{thebibliography}{99}


\bibitem{williams2007} S.\ Corteel and L.\ Williams. Tableaux combinatorics for the asymmetric exclusion process, Advances in Applied Mathematics, Volume 39, Issue 3, 293--310 (2007).

\bibitem{williams2011} S.\ Corteel and L.\ Williams. Tableaux combinatorics for the asymmetric exclusion process and Askey-Wilson polynomials. Duke Math. J., 159: 385--415, (2011).

\bibitem{derrida} B.\ Derrida, M.\ Evans, V.\ Hakim, V.\ Pasquier. Exact solution of a 1D asymmetrix exclusion model using a matrix formulation, J. Phys. A: Math. Gen. 26, 1493--1517 (1993).

\bibitem{duchi} E.\ Duchi and G.\ Schaeffer. A combinatorial approach to jumping particles, J. Combin. Theory Ser. A 110, 1--29 (2005).

\bibitem{narayana} T.\ V.\ Narayana. A combinatorial problem and its application to probability theory. J. Indian Soc. Agric. Statist. 7, 169--178 (1955).

\bibitem{shapiro} L.\ Shapiro and D.\ Zeilberger. A Markov chain occurring in enzyme kinetics, J. Math. Biol. 15 351--357 (1982).

\bibitem{viennot} X.\ G.\ Viennot. Canopy of binary trees, Catalan tableaux and the asymmetric exclusion process, FPSAC 2007, Formal Power Series and Algebraic Combinatorics (2007).

\bibitem{slides} X.\ G.\ Viennot. Alternative tableaux, permutations, and Partially Asymmetric Exclusion Process. \url{http://www.xavierviennot.org/xavier/exposes_files/AT_Cambridge_web_1.pdf} (2008).


\bibitem{advances2005} L.\ Williams, Enumeration of totally positive Grassmann cells, Advances in Mathematics, Volume 190, Issue 2, 319--342 (2005).





\end{thebibliography}
\end{document}